\newtheorem{lemma}{Lemma}[subsection] 
\newtheorem{proposition}[lemma]{Proposition}
\newtheorem{theorem}[lemma]{Theorem}
\newtheorem{corollary}[lemma]{Corollary}
\newcommand{\FLEX}{\relax}
\newcommand{\flex}[1]{\renewcommand{\FLEX}{#1}}
\newtheorem{flexthm}[lemma]{\FLEX}
\newenvironment{flexstate}[2]{\flex{#1}\begin{flexthm}[#2]}{\end{flexthm}}
\theoremstyle{definition}
\newtheorem{definition}[lemma]{Definition}
\newtheorem{example}[lemma]{Example}
\newenvironment{remark}[1]{\refstepcounter{lemma}%
\vskip 5pt \par\noindent {\bf #1\ \thelemma .}}{\vskip 5pt \par}
\newenvironment{remark*}[1]{\par \vskip 5pt \noindent
{\bf #1.}}{\vskip 5pt \par}
\newlength{\dqlength}
\newcommand{\alg}{{\rm Alg}\,}
\newcommand{\bh}{\ensuremath{{\mathcal B}({\mathcal H})}}
\newcommand{\cstar}{\hbox{$C^*$}}
\newcommand{\cstaralg}{$C^*$-algebra}
\providecommand{\dual}[1]{\ensuremath{#1^{\#}}}
\newcommand{\dom}{\operatorname{dom}}
\newcommand{\dstext}[1]{\quad\text{#1}\quad}
\newcommand{\eps}{\ensuremath{\varepsilon}}
\newcommand{\innerprod}[1]{\left\langle #1\right\rangle}
\newcommand{\join}{\bigvee}
\newcommand{\lat}{{\rm Lat}\,}
\newcommand{\meet}{\bigwedge}
\newcommand{\norm}[1]{\left\|{#1}\right\|}
\providecommand{\qed}%
{\hfill \vrule height5pt width4pt depth1pt \vspace{+2.00ex}}
\newcommand{\ran}{\operatorname{range}}
\newcommand{\spn}{\operatorname{span}}
\newcommand{\supp}{\operatorname{supp}}
\newcommand{\bbC}{{\mathbb{C}}}
\newcommand{\bbN}{{\mathbb{N}}}
\newcommand{\bbR}{{\mathbb{R}}}
\newcommand{\bbT}{{\mathbb{T}}}
\renewcommand{\AA}{{\mathbf{A}}}
\newcommand{\MM}{{\mathbf{M}}}
\newcommand{\A}{{\mathcal{A}}}
\newcommand{\B}{{\mathcal{B}}}
\newcommand{\C}{{\mathcal{C}}}
\newcommand{\D}{{\mathcal{D}}}
\newcommand{\E}{{\mathcal{E}}}
\newcommand{\G}{{\mathcal{G}}}
\renewcommand{\H}{{\mathcal{H}}}
\renewcommand{\L}{{\mathcal{L}}}
\newcommand{\M}{{\mathcal{M}}}
\newcommand{\N}{{\mathcal{N}}}
\newcommand{\R}{{\mathcal{R}}}
\renewcommand{\S}{{\mathcal{S}}}
\newcommand{\U}{{\mathcal{U}}}
\newcommand{\Z}{{\mathcal{Z}}}
\newcommand{\fA}{{\mathfrak{A}}}
\newcommand{\fB}{{\mathfrak{B}}}
\newcommand{\fD}{{\mathfrak{D}}}
\newcommand{\fH}{{\mathfrak{H}}}
\newcommand{\fL}{{\mathfrak{L}}}
\newcommand{\fS}{{\mathfrak{S}}}
\newcommand{\fT}{{\mathfrak{T}}}
\newcommand{\cstardiag}{\cstar-diagonal}
\newcommand{\bimod}{\operatorname{bimod}}
\newcommand{\mean}{\mathop{
     \mathchoice{\vcenter{\hbox{\huge
           $\Lambda$}}}{\Lambda}{\Lambda}{\Lambda}}
     \displaylimits}
\newcommand{\fn}{\mathfrak{n}}
\newcommand{\pil}{\pi_\ell}
\newcommand{\pir}{\pi_r}
\newcommand{\Nsf}{\N_{\text{sf}}}
\begin{document}

\title{Bimodules over Cartan MASAs in von Neumann Algebras, Norming
  Algebras, and Mercer's Theorem}
\author{Jan Cameron}
\address{Dept. of Mathematics\\ Vassar College\\ Poughkeepsie, NY\\ 12604}
\email{jacameron@vassar.edu}
\author[D.R. Pitts]{David R. Pitts}

\address{Dept. of Mathematics\\ 
University of Nebraska-Lincoln\\ Lincoln, NE\\ 68588-0130}
\email{dpitts2@math.unl.edu}
\author{Vrej Zarikian}
\thanks{Zarikian was partially supported by Nebraska IMMERSE}
\address{Dept. of Mathematics\\ U. S. Naval Academy\\ Annapolis, MD\\ 21402}
\email{zarikian@usna.edu}

\keywords{Norming algebra, Cartan MASA, \cstar-diagonal}
\subjclass[2000]{47L30, 46L10, 46L07}

\begin{abstract}
  In a 1991 paper, R. Mercer asserted that a Cartan bimodule
  isomorphism between Cartan bimodule algebras $\A_1$ and $\A_2$
  extends uniquely to a normal $*$-isomorphism of the von Neumann
  algebras generated by $\A_1$ and
  $\A_2$~\cite[Corollary~4.3]{MercerIsIsCaBiAl}.  Mercer's argument
  relied upon the Spectral Theorem for Bimodules of Muhly, Saito and
  Solel~\cite[Theorem~2.5]{MuhlySaitoSolelCoTrOpAl}.  Unfortunately,
  the arguments in the literature supporting
  \cite[Theorem~2.5]{MuhlySaitoSolelCoTrOpAl} contain gaps, and hence
  Mercer's proof is incomplete.

  In this paper, we use the outline
  in~\cite[Remark~2.17]{PittsNoAlAuCoBoIsOpAl} to give a proof of
  Mercer's Theorem under the additional hypothesis that the given
  Cartan bimodule isomorphism is $\sigma$-weakly continuous.  Unlike
  the arguments contained in
  \cite{MercerIsIsCaBiAl,MuhlySaitoSolelCoTrOpAl}, we avoid the use of
  the Feldman-Moore machinery from \cite{FeldmanMooreErEqReII}; as a
  consequence, our proof does not require the von Neumann algebras
  generated by the algebras $\A_i$ to have separable preduals.  This
  point of view also yields some insights on the von Neumann
  subalgebras of a Cartan pair $(\M,\D),$ for instance, a
  strengthening of a result of Aoi \cite{AoiCoEqSuInSu}.
  
  We also examine the relationship between various topologies on a von
  Neumann algebra $\M$ with a Cartan MASA $\D$.  This provides the
  necessary tools to parametrize the family of Bures-closed bimodules
  over a Cartan MASA in terms of projections in a certain abelian von
  Neumann algebra; this result may be viewed as a weaker form of the
  Spectral Theorem for Bimodules, and is a key ingredient in the proof
  of our version of Mercer's theorem.  Our results lead to a notion of
  spectral synthesis for $\sigma$-weakly closed bimodules appropriate
  to our context, and we show that any von Neumann subalgebra of $\M$
  which contains $\D$ is synthetic.

  We observe that a result of Sinclair and Smith shows that any Cartan
  MASA in a von Neumann algebra is norming in
  the sense of Pop, Sinclair and Smith.

\end{abstract}

\maketitle

\vskip 12pt

\section{Background and Preliminaries}

\subsection{Background}
The following appears in a 1991 paper of R. Mercer:

\begin{flexstate}{Assertion}{{\cite[Corollary~4.3]{MercerIsIsCaBiAl}}} 
\label{Mercer}
  For $i= 1, 2$, let $\M_i$ be a von Neumann algebra with separable
  predual and let $\D_i\subseteq \M_i$ be a Cartan MASA.  Suppose
  $\A_i$ is a $\sigma$-weakly closed subalgebra of $\M_i$ which
  contains $\D_i$ and which generates $\M_i$ as a von Neumann algebra.

  If $\theta:\A_1\to \A_2$ is an isometric algebra isomorphism such
  that $\theta(\D_1) = \D_2$, then $\theta$ extends to a von Neumann
  algebra isomorphism $\overline\theta:\M_1 \to \M_2$. Furthermore, if
  $\M_i$ is identified with its Feldman-Moore representation, so $\M_i
  \subseteq \B(L^2(R_i))$, then $\overline\theta$ may be taken to be a
  spatial isomorphism.
\end{flexstate}

Mercer's argument supporting this assertion relies upon the Spectral
Theorem for Bimodules of Muhly, Saito and
Solel~\cite[Theorem~2.5]{MuhlySaitoSolelCoTrOpAl}.  The purpose
of~\cite[Theorem~2.5]{MuhlySaitoSolelCoTrOpAl} is to characterize
$\sigma$-weakly closed bimodules over a Cartan MASA in terms of
measure-theoretic data.  We know of
two articles claiming to prove this characterization: the original
paper~\cite{MuhlySaitoSolelCoTrOpAl} and another paper of Mercer,
see~\cite[Theorem~5.1]{MercerBiOvCaSu}.  Unfortunately, the proofs in
both articles contain gaps, so the validity
of~\cite[Theorem~2.5]{MuhlySaitoSolelCoTrOpAl} in general is
uncertain.  However, for $\sigma$-weakly closed bimodules over a
Cartan MASA in a hyperfinite von Neumann algebra, the Spectral Theorem
for Bimodules follows from a more general result of Fulman,
see~\cite[Theorem~15.18]{FulmanCrPrvNAlEqReThSu}.

The paper of Aoi~\cite[pages 724--725]{AoiCoEqSuInSu} gives a
discussion of the gap in the proof presented
in~\cite[Theorem~2.5]{MuhlySaitoSolelCoTrOpAl}.  On the other hand, Mercer's
argument (see the proof of \cite[Theorem~5.1]{MercerBiOvCaSu}) claims
that if $(\M,\D)$ is a pair consisting of a separably-acting von
Neumann algebra $\M$ and a Cartan MASA $\D$, and $\S\subseteq \M$ is a
$\sigma$-weakly closed subspace, then $\S$ is closed in the relative
$L^2$ topology.  (This is the topology arising from the norm, $\M \ni
T \mapsto \sqrt{\omega(E(T^*T))}$, where $\omega$ is a fixed faithful
normal state on $\D$ and $E:\M\rightarrow \D$ is the faithful normal
conditional expectation.)  The following example, from Roger Smith,
shows this statement is false.

\begin{example}
  Let $\M = \D =L^\infty[0,1]$ where the measure is Lebesgue measure.
  In this case, the $L^2$ topology on $\M$ is the relative topology on
  $\M$ arising from viewing $\M$ as a subspace of $L^2[0,1]$.  Since
  $\M_*$ may be identified with $L^1[0,1]$, the linear functional
  $\phi$ on $\M$ given by
\[
    \phi(f) := \int_0^1 f(x) x^{-3/4}\, dx
\]
is $\sigma$-weakly continuous.  Let $\S := \ker\phi$.  Then $\S$ is
$\sigma$-weakly closed.  But $\phi$ is not continuous with respect to
the $L^2$-norm, so $\S$ is not $L^2$-closed \cite[Theorem
3.1]{ConwayCoFuAn}.
\end{example}

Because of these issues, the question of whether
Assertion~\ref{Mercer} is correct in general arises.  It is
interesting that when Assertion~\ref{Mercer} is valid, $\theta$ is
necessarily $\sigma$-weakly continuous.  While Mercer did not
explicitly assume $\theta$ is $\sigma$-weakly continuous (or
continuous with respect to another appropriate topology) in his
assertion, he does implicitly make one.  Indeed, Mercer's argument for
Assertion~\ref{Mercer} relies
upon~\cite[Proposition~2.2]{MercerIsIsCaBiAl}, and the first paragraph
of the proof of that proposition implicitly assumes a continuity
hypothesis.  Thus, the statement of Assertion~\ref{Mercer} appearing
in~\cite{MercerIsIsCaBiAl} should also include an appropriate
continuity assumption.

A principal goal of this paper is to provide a proof of
Assertion~\ref{Mercer}, under the additional hypothesis that $\theta$
is $\sigma$-weakly continuous, which does not use the Spectral Theorem
for Bimodules.  Our argument uses the notion of norming algebras and
follows the outline given in \cite[Remark
2.17]{PittsNoAlAuCoBoIsOpAl}.  Unlike Mercer's original statement, we
do not require that $\M$ have separable predual.  We shall require an
understanding of two topologies on $\M$, the Bures and $L^2$
topologies.  As a consequence of this analysis, we obtain
Theorem~\ref{newSTB}, the Spectral Theorem for Bures Closed Bimodules,
where the bimodules characterized are those which are closed in the
Bures (or, equivalently, the $L^2$) topology rather than the
$\sigma$-weak topology.  Instead of using measure theoretic data to
characterize Bures closed bimodules, our characterization uses
projections in a certain abelian von Neumann algebra constructed from
the Cartan MASA $\D$ and $\M$.  This leads to a notion of synthesis
similar to that found in Arveson's seminal
paper~\cite{ArvesonOpAlInSu}, but appropriate to our context.  When
$\A\subseteq \M$ is a von Neumann algebra containing $\D$, we give a
new proof, and a strengthening, of a result of Aoi
\cite{AoiCoEqSuInSu}, which shows that $\D$ is a Cartan MASA in $\A$
and establishes the existence of a conditional expectation from $\M$
onto $\A.$ Our methods also show that any von Neumann subalgebra of
$\M$ containing $\D$ is Bures closed, from which it follows that the
class of von Neumann subalgebras of $\M$ which contain $\D$ is a class
of $\D$-bimodules which satisfy synthesis and for which the conclusion of
\cite[Theorem~2.5]{MuhlySaitoSolelCoTrOpAl} is valid.

We are grateful to the referee of a previous version of this paper for
alerting us to the issues involving the Spectral Theorem for Bimodules
and to Paul Muhly for the references to the papers of Aoi and Fulman.

We also wish to acknowledge our indebtedness to the very interesting
papers of Muhly-Saito-Solel~\cite{MuhlySaitoSolelCoTrOpAl} and
Mercer~\cite{MercerBiOvCaSu} discussed above.  Many of the ideas found in
those papers provide techniques for the analysis of
bimodules in our context.  We utilized several of the tools in those
papers and the present paper would not have been written without them.

\subsection{Some General Notation}

Because we shall be dealing with certain nonselfadjoint algebras, we
use $\dual{X}$ for the dual of the Banach space $X$; likewise, when
$X$ is a complex vector space and $\tau$ is a locally convex topology
on $X$, $\dual{(X,\tau)}$ will denote its dual space.

For any unital \cstaralg\ $\C$ containing a unital abelian $C^*$-subalgebra
$\D$, let
\[ \N(\C,\D) := \{v \in \C: v^*\D v \cup v\D v^* \subseteq \D\}.
\] An element $v \in \N(\C,\D)$ is a \textit{normalizer} of $\D$.
Finally, if $v \in \N(\C,\D)$ is a partial isometry, then we say that
$v$ is a \emph{groupoid normalizer} of $\D$, and write $v \in
\G\N(\C,\D)$.

\begin{lemma}\label{polarnormal} Let $\M$ be a von Neumann algebra, 
let $\D \subseteq \M$ be an abelian von Neumann subalgebra (with the
same unit) and let $\S \subseteq \M$ be a
$\sigma$-weakly closed $\D$-bimodule.  Given $v \in \S\cap\N(\M,\D)$, 
let $v = u|v|$ be the polar
decomposition of $v$.  Then $u \in \S\cap\G\N(\M,\D)$.
\end{lemma}

\begin{proof} The statement is trivial if $v = 0$, so assume $v \neq
0$.  Since $v \in \N(\M,\D)$, $v^* I v \in \D$, so $|v| \in \D$.  Let
$S$ be the spectral measure for $|v|$.  For $0 < \eps < \norm{v}$, let
$f_\eps(t) = t^{-1}\chi_{[\eps,\infty)}(t)$ and $P_\eps =
S([\eps,\norm{v}])$.  Then $|v|f_\eps(|v|) = P_\eps$, so $vf_\eps(|v|)
= uP_\eps$ converges $\sigma$-strong-$*$ to $u$ as $\eps \rightarrow
0$.  Also, $vf_\eps(|v|) \in \N(\M,\D)$ with $\norm{vf_\eps(|v|)} \leq
1$. Since multiplication on bounded sets is jointly continuous in the
$\sigma $-strong topology, we conclude that $u \in \G\N(\M,\D)$.

Since
$v \in \S$, $u|v|^n = v|v|^{n-1} \in \S$ for all $n \in \bbN$, which
implies $u|v|^{1/n} \in \S$ for all $n \in \bbN$.  But $u|v|^{1/n}
\stackrel{\sigma\text{-weak}}{\longrightarrow} uu^*u = u$, so $u \in \S$.
\end{proof}

\begin{definition}
A MASA $\D$ in a von Neumann algebra $\M$ is called a \textit{Cartan MASA} if
there is a faithful, normal conditional expectation $E:\M \rightarrow
\D$ and $\spn\{U \in \M: U \text{ is unitary and } U\D U^* = \D\}$ is
$\sigma$-weakly dense in $\M$.  We will call the pair $(\M,\D)$ a
\textit{Cartan pair}.  
\end{definition}

\begin{remark}{Standing Assumption} Unless explicitly stated to the
  contrary, throughout this paper, $\M$ will denote a von Neumann
  algebra with a Cartan MASA $\D$.  \end{remark}

\subsection{Bimodules and Normalizers}

We now give some properties of the expectation $E$, and use them to
show that bimodules often contain a rich supply of normalizers.  We
require some notation.  Recall that any discrete abelian group $G$ has
an invariant mean $\mean$.  This means that $\mean$ is a state on
$\ell^\infty(G)$ such that for any $h \in G$ and $F \in
\ell^{\infty}(G)$, $\mean(F) = \mean(F_h)$, where $F_h(g) =
F(gh^{-1})$.  We will usually write, $\mean_{g \in G} F(g)$ instead of
$\mean(F)$.  We will always assume that $\mean$ has the additional
property that it is invariant under inversion, that is,
\[ \mean_{g \in G} F(g) = \mean_{g \in G} F(g^{-1});
\] this can be achieved by replacing $\mean$ if necessary with
$\tilde{\mean}$, where $\tilde{\mean}_{g \in G} F(g) = \mean_{g \in G}
\frac{F(g) + F(g^{-1})}{2}$.

We now require two lemmas, the first of which is standard.
Throughout, when $\C$ is a unital \cstaralg,  $\U(\C)$ denotes the
unitary group of $\C$.

\begin{lemma}\label{invmean} Let $X$ be a Banach space and let $\mean$
be an invariant mean on the (discrete) group $\U(\D)$.  Suppose that
$f:\U(\D) \rightarrow \dual{X}$ is a bounded function.  Then there
exists $T \in \overline{\text{co}}^{\text{weak-}*}\{f(U): U \in
\U(\D)\}$ such that for every $x \in X$,
\[ \innerprod{x,T} =\mean_U \innerprod{x,f(U)}.
\]
\end{lemma}

\begin{proof} The existence of $T$ follows from the fact that the map
$X \ni x \mapsto \mean_U \innerprod{x,f(U)}$ is a bounded linear
functional on $X$.  For every $x \in X$, $\innerprod{x,T}$
belongs to the closed convex hull of $\{\innerprod{x,f(U)}: U \in
\U(\D)\}$.  So a separation theorem shows that $T \in
\overline{\text{co}}^{\text{weak-}*}\{f(U): U \in \U(\D)\}$.
\end{proof}

\begin{remark}{Notation} In the setting of Lemma~\ref{invmean}, we
write $T := \mean_U f(U)$.
\end{remark}

The following well-known fact appears
as~\cite[Theorem~6.2.1]{ArvesonAnOpAl}.  Since it will be useful in
the sequel, we include a proof for the convenience of the reader.

\begin{lemma}\label{uniquece} For $T \in \M$,
\[ E(T) = \mean_{U\in\U(\D)} UTU^*
\] and
\[ \{E(T)\} = \D \cap
\overline{\text{co}}^{\sigma\text{-weak}}\{UTU^*: U \in \U(\D)\}.
\]
\end{lemma}

\begin{proof} For $T \in \M$, set $E_1(T) = \mean_{U \in \U(\D)}
UTU^*$.  Given $\rho \in \M_*$, and $W\in\U(\D)$, we have
\begin{align*} \rho(WE_1(T)) &= \mean_{U \in \U(\D)} \rho(WUTU^*)\\ &=
\mean_{U \in \U(\D)} \rho((WU)T(WU)^*W)\\ &= \mean_{U \in \U(\D)}
\rho(UTU^*W)\\ &= \rho(E_1(T)W).
\end{align*} Therefore $E_1(T)$ commutes with $\U(\D)$.  But $\D$ is
the linear span of $\U(\D)$, so $E_1(T) \in \D' \cap \M$.  Since $\D$
is a MASA in $\M$, $E_1(T) \in \D$.  The normality of $E$ and the fact
that $E(UTU^*) = E(T)$ for each $U \in \U(\D)$ yield
\begin{align*} \{E_1(T)\} &\subseteq \D \cap
\overline{\text{co}}^{\sigma\text{-weak}}\{UTU^*: U \in \U(\D)\}\\ &=
E(\D \cap \overline{\text{co}}^{\sigma\text{-weak}}\{UTU^*: U \in
\U(\D)\})\\ &\subseteq
E(\overline{\text{co}}^{\sigma\text{-weak}}\{UTU^*: U \in \U(\D)\})\\
&= \{E(T)\}.
\end{align*}
\end{proof}

The following result, together with Lemma \ref{polarnormal}, shows that any $\D$-bimodule in $\M$ which is closed in an
appropriate topology contains an abundance of groupoid normalizers.
The technique used here has been employed previously in several
articles, for example,
see~\cite[Proposition~4.4]{MuhlyQiuSolelCoNuSpSuOpAl}
or~\cite[Proposition~3.10]{DonsigPittsCoSyBoIs}.

\begin{proposition}\label{weaknormalizerexists} Let $\S \subseteq \M$
be a $\sigma$-weakly closed $\D$-bimodule.  If $v \in \N(\M,\D)$ and
$T \in \S$, then $vE(v^*T) \in \S$, and when $T \neq 0$, $v \in
\N(\M,\D)$ may be chosen so that $vE(v^*T) \neq 0$. In particular, if
$\S$ is non-zero, then $(\S \backslash \{0\}) \cap \N(\M,\D) \neq
\emptyset$.
\end{proposition}

\begin{proof} If $v \in N(\M,\D)$ and $T \in \S$, then
Lemma~\ref{uniquece} shows that
\[ \{vE(v^*T)\} \subseteq
v\,\overline{co}^{\sigma\text{-weak}}\{Uv^*TU^*: U \in \U(\D)\} =
\overline{co}^{\sigma\text{-weak}}\{(vUv^*)TU^*: U \in \U(\D)\}
\subseteq \S
\] (because $vUv^*\in \D$).

If $T \in \M$ satisfies $E(v^*T) = 0$ for every $v \in \N(\M,\D)$,
then $T = 0$.  Indeed, for every $x \in \spn\N(\M,\D)$, $E(x^*T) = 0$.
By normality of $E$, we conclude that $E(T^*T) = 0$.  As $E$ is
faithful, $T = 0$.

If $0 \neq T \in \M$ and $v \in \N(\M,\D)$ satisfies $E(v^*T)
\neq 0$, then $vE(v^*T) \neq 0$.  To see this, argue by contradiction.
If $vE(v^*T) = 0$, then $(v^*v)^nE(v^*T) = 0$ for every $n \in \bbN$.
Therefore, $(v^*v)^{1/n}E(v^*T) = 0$ for every $n \in \bbN$.  But
\[ 0 \neq E(v^*T) = \lim_{n \to \infty} E((v^*v)^{1/n}v^*T) = \lim_{n
\to \infty} (v^*v)^{1/n}E(v^*T) = 0,
\] which is absurd.  Thus $vE(v^*T)\neq 0$, and the proof is complete.
\end{proof}

We now give a slight generalization of a result appearing
in~\cite{MercerBiOvCaSu}.  We use it throughout the paper, often
without explicit mention.  We include the proof because it seems
novel.

\begin{lemma}[{\cite[Lemma~2.1]{MercerBiOvCaSu}}]\label{conjbyv} Let
$v\in\N(\M,\D)$.  Then for every $x \in \M$,
\[ E(v^*xv) = v^*E(x)v.
\]
\end{lemma}

\begin{proof} We prove this in several steps.

\textit{Step 1:} First, assume that $v$ is a unitary normalizer.
Since $v^*\U(\D)v = \U(\D)$, Lemma~\ref{uniquece} gives
\begin{align*} \{E(v^*xv)\} &=
\overline{co}^{\sigma\text{-weak}}\{U^*v^*xvU: U \in \U(\D)\} \cap
\D\\ &= \overline{co}^{\sigma\text{-weak}}\{v^*(vU^*v^*)x(vUv^*)v: U
\in \U(\D)\} \cap \D\\ &=
[v^*\left(\overline{co}^{\sigma\text{-weak}}\{(vU^*v^*)x(vUv^*): U \in
\U(\D)\}\right)v] \cap v^*\D v\\ &=
v^*[\left(\overline{co}^{\sigma\text{-weak}}\{(vU^*v^*)x(vUv^*): U \in
\U(\D)\}\right) \cap \D]v\\ &= \{v^*E(x)v\}.
\end{align*} Thus the lemma holds in this case.

\textit{Step 2:} Next, assume $v$ is a partial isometry.  Then
\[ V := \begin{pmatrix} v & (I - vv^*)\\ (I - v^*v) &
v^* \end{pmatrix}
\] is a unitary element of $M_2(\M) = \M \otimes M_2(\bbC)$.  Let
\[ D_2(\D) := \left\{\begin{pmatrix} d_1 & 0\\ 0 & d_2 \end{pmatrix} :
d_i \in \D\right\}.
\] Then $(M_2(\M),D_2(\D))$ is a Cartan pair, and the conditional
expectation is the map $E_2$ given by
\[ M_2(\M) \ni \begin{pmatrix} y_{11} & y_{12}\\ y_{21} &
y_{22} \end{pmatrix} \mapsto \begin{pmatrix} E(y_{11}) & 0\\ 0 &
E(y_{22}) \end{pmatrix}.
\] A simple calculation using the fact that $vv^*, v^*v\in \D$ shows
that $V \in \N(M_2(\M),D_2(\D))$.  By Step 1, we have, for $X
= \begin{pmatrix} x & 0\\ 0 & 0 \end{pmatrix}$, $E_2(V^*XV) =
V^*E_2(X)V$.  The equality of the upper-left corner entries of these
matrices yields $E(v^*xv) = v^*E(x)v$.

\textit{Step 3:} Finally, assume that $v$ is a general normalizer.
Let $v = u|v|$ be the polar decomposition of $v$.  Then $u$ is a
partial isometry normalizer, by Lemma~\ref{polarnormal}.  Since $|v|
\in \D$, we have
\[ E(v^*xv) = |v|E(u^*xu)|v| =|v|u^*E(x)u|v| = v^*E(x)v.
\]
\end{proof}

\subsection{A MASA}

Here we show that when $(\M,\D)$ is in the standard form arising from
a suitable weight, then the von Neumann algebra generated by $\D$ and
$\D'$ is a MASA.  As a corollary, we show that $\D$ norms $\M$, in the
sense of Pop-Sinclair-Smith~\cite{PopSinclairSmithNoC*Al}.  Note that
these observations are implicit in ~\cite{SinclairSmithHoCoVNAlCaSu}
when the von Neumann algebra $\M$ is assumed to be finite and have
separable predual.

Fix a faithful, normal, semifinite weight $\phi$ on $\M$ such that
$\phi \circ E =\phi$.  (If $\omega$ is a faithful, normal, semifinite
weight on $\D$, then $\phi=\omega \circ E$ is such a weight on $\M$,
see~\cite[Proposition~IX.4.3]{TakesakiThOpAlII}.)  We freely use
notation from \cite{TakesakiThOpAlII}: in particular,
\[ \fn_\phi := \{T \in \M: \phi(T^*T) <\infty\},
\] and $(\pi_\phi,\fH_\phi,\eta_\phi)$ is the semi-cyclic
representation associated to $\phi$.
(See~\cite[VII.1]{TakesakiThOpAlII} for more details.)  Since
$E(T)^*E(T)\leq E(T^*T)$ for every $T \in \M$, we have $E(\fn_\phi) =
\fn_\phi \cap \D$.

\begin{lemma}\label{sotdense} Let $\Gamma = \{d \in \fn_\phi \cap \D:
0 \leq d \leq I\}$ and view $\Gamma$ as a net indexed by itself. Then
for $x \in \fn_\phi$, $\lim_{d \in \Gamma} \eta_\phi(xd) =
\eta_\phi(x)$.
\end{lemma}

\begin{proof} Let $S$ be the spectral measure for $E(x^*x)$, and let
$\mu$ be the (finite) Borel measure on $[0,\infty)$ defined by $\mu(A)
= \phi(E(x^*x)S(A))$.  Then $\lim_{t \to 0} \mu([0,t)) = \mu(\{0\}) =
0$, so given $\eps > 0$ we may find $t > 0$ so that $\mu([0,t)) <
\eps^2$.  Since $tS([t,\infty)) \leq E(x^*x)$, we obtain $p :=
S([t,\infty)) \in \Gamma$.  For $d \in \Gamma$ with $d \geq p$, we
have,
\[ \norm{\eta_\phi(x) - \eta_\phi(xd)}^2 = \phi(E(x^*x)(I-d)^2) \leq
\phi(E(x^*x)(I-p)) = \mu([0,t)) < \eps^2.
\]
\end{proof}

\begin{corollary}\label{sotdensecor} Given $\eps > 0$ and $\zeta \in
\fH_\phi$, there exists $d \in \fn_\phi \cap \D$ and $y \in
\spn\N(\M,\D)$ such that
\[ \norm{\zeta - \eta_\phi(yd)} < \eps.
\]
\end{corollary}

\begin{proof} Since $\eta_\phi(\fn_\phi)$ is dense in $\fH_\phi$, we
may find $x \in \fn_\phi$ such that $\norm{\zeta - \eta_\phi(x)} <
\eps/3$.  By Lemma~\ref{sotdense}, there exists $d\in\fn_\phi\cap \D$
such that $0 \leq d \leq I$ and $\norm{\eta_\phi(x) - \eta_\phi(xd)} <
\eps/3$.  Let $\M_0:=\spn\N(\M,\D)$.  Then $\M_0$ is a unital
$*$-algebra which is $\sigma$-strongly dense in $\M$.  Thus we may
find $y \in \spn\N(\M,\D)$ such that
\[ \norm{\eta_\phi(xd) - \eta_\phi(yd)} = \sqrt{\langle \pi_\phi((x -
y)^*(x - y))\eta_\phi(d), \eta_\phi(d) \rangle} < \eps/3.
\] It follows that $\norm{\zeta - \eta_\phi(yd)} < \eps$.
\end{proof}

Since $\phi\circ E=\phi$,  $\fn_\phi$ and $\fn_\phi^*$ are
$\D$-bimodules and furthermore, for  $D \in \D$, $x \in \fn_\phi$ and
$y\in\fn_\phi^*$ ,
\begin{align}\max\{\phi((Dx)^*(Dx)),\phi((xD)^*(xD))\}&\leq
\norm{D}^2\phi(x^*x)\dstext{and}\label{nphi}\\
\max\{\phi((Dy^*)^*(Dy^*)),\phi((y^*D)^*(y^*D))\}&\leq
\norm{D}^2\phi(yy^*).\label{nphistar}
\end{align}
In particular, for $D\in\D$, the maps on $\eta_\phi(\fn_\phi)$
given by
\[ \pil(D)\eta_\phi(x) = \eta_\phi(Dx) \dstext{and}
\pir(D)\eta_\phi(x) = \eta_\phi(xD)
\] extend to bounded operators $\pil(D)$ and $\pir(D)$ on $\fH_\phi$.
This produces $*$-representations $\pil$ and $\pir$ of $\D$ on
$\fH_\phi$.  Clearly, $$\pil=\pi_\phi|_\D.$$ The relationship
between $\pil$ and $\pir$ is given by Lemma~\ref{relation} below,
whose proof is joint work with Adam Fuller.  The image of $\M$ under
$\pi_\phi$ acts on $\fH_\phi$ in standard form, and we write $J$ for
the modular conjugation operator.

\begin{lemma}\label{relation}
For each $D\in \D$,
$$J\pil(D)J=\pir(D^*).$$
\end{lemma}
\begin{proof}
Throughout the proof, we will freely use notation
from~\cite{TakesakiThOpAlII}, sometimes without explicit mention.

Let $\fA_\phi$ be the full left Hilbert algebra
$\eta_\phi(\fn_\phi\cap\fn_\phi^*)$ (see
\cite[Theorem~VII.2.6]{TakesakiThOpAlII}).  For $x\in
\fn_\phi\cap\fn_\phi^*$ and $D\in \D$,
\begin{equation}\label{firstsharp}
\pil(D)(\eta_\phi(x)^\sharp)=\eta_\phi(Dx^*)=\eta_\phi(xD^*)^\sharp=
(\pir(D^*) \eta_\phi(x))^\sharp.
\end{equation}
The estimates~\eqref{nphi} and~\eqref{nphistar}
 combined with \cite[Lemma~VI.1.4]{TakesakiThOpAlII}
yield that $\fD^\sharp$ is invariant under $\pil(D)$ and
$\pir(D^*)$.  Thus, \eqref{firstsharp} implies that for $\xi\in\fD^\sharp$,
$\pil(D)S\xi =S\pir(D^*)\xi;$ similarly, $S\pil(D)\xi=\pir(D^*)S\xi$.
Hence  \begin{equation}\label{spi}
\pil(D)S=S\pir(D^*)\dstext{and} S\pil(D)=\pir(D^*)S.
\end{equation}
Since $\fD^\flat=\{\zeta\in\fH_\phi:\fD^\sharp\ni\xi\mapsto
\innerprod{\zeta, S\xi} \text{ is bounded}\}$, we see that $\fD^\flat$
is also invariant under $\pil(D^*)$ and $\pir(D)$.
Next, \cite[Lemma~VI.1.5(ii)]{TakesakiThOpAlII} yields,
\begin{equation}\label{fpi}
F\pil(D^*)=\pir(D)F\dstext{and} \pil(D^*)F=F\pir(D).
\end{equation}
Therefore,
$$\Delta\pil(D)=FS\pil(D)=F\pir(D^*)S=\pil(D)FS=\pil(D)\Delta.$$
We thus obtain,
$$\Delta^{1/2}\pil(D)=\pil(D)\Delta^{1/2}.$$
By \cite[Lemma~VI.1.5(v)]{TakesakiThOpAlII}, for
$\xi\in\fD(\Delta^{1/2})=\fD^\sharp$,
$$\pir(D^*)\xi=S\pil(D)S\xi=J\Delta^{1/2}\pil(D)\Delta^{-1/2}J\xi=J\pil(D)J\xi.$$
  Since $\fD^\sharp$ is dense
in $\fH_\phi$ and $\{\pir(D^*),
J\pil(D)J\}\subseteq\B(\fH_\phi)$, the lemma follows.

\end{proof}

\begin{remark}{Notation}
Let
\[ \Z := (\pil(\D) \cup \pir(\D))''.
\] 
\end{remark}
Our first task is to show that $\Z$ is a MASA in $\B(\fH_\phi)$.
While this is established in~\cite[Theorem~1 and
Proposition~2.9(1)]{FeldmanMooreErEqReII}, we provide an alternate
proof (also see~\cite{PopaNoCaSuTyIIFa}).  Our proof has the advantage
that it avoids some of the measure-theoretic issues of the
Feldman-Moore approach, and does not require the separability of
$\M_*$.  

\begin{remark}{Notation} Denote by $P$ the projection on $\fH_\phi$
determined by extending the map $\eta_\phi(\fn_\phi) \ni \eta_\phi(x)
\mapsto \eta_\phi(E(x))$ by continuity.  A calculation shows that for
any $D\in\D$,
\begin{equation}\label{Pcomm} \pil(D)P = \pir(D)P = P\pir(D) =
P\pil(D).
\end{equation}
\end{remark}

\begin{lemma}\label{inZ} For $v \in \G\N(\M,\D)$, set
\[ P_v = \mean_{U\in\U(\D)} \pil(vUv^*)\pir(U^*) \in \B(\fH_\phi).
\] Then $P_v \in \Z$ and the following statements hold.
\begin{enumerate}
\item $P_v = \pi_\phi(v)P\pi_\phi(v)^*$.
\item $P_v$ is the orthogonal projection onto
$\overline{\{\eta_\phi(vd): d \in \fn_\phi \cap \D\}}$, and for $x \in
\fn_\phi$,
\begin{equation}\label{formP} P_v\eta_\phi(x) = \eta_\phi(vE(v^*x)).
\end{equation}
\item If $\xi \in \ran(P_v)$, then there exists $h \in \G\N(\M,\D)$
such that $P_h$ is the projection onto $\overline{\Z\xi}$ and $P_h
\leq P_v$.
\item If $v, w \in \G\N(\M,\D)$, then $P_v \perp P_w$ if and only if
$E(v^*w) = 0$.
\end{enumerate}
\end{lemma}

\begin{proof} Since $v \in \N(\M,\D)$, we have $vUv^* \in \D$ for
every $U \in \U(\D)$.  Hence the function $f(U) =
\pil(vUv^*)\pir(U^*)$ maps $\U(\D)$ into $\Z$, so Lemma~\ref{invmean}
shows that $P_v\in \Z$.

Let $d \in \mathfrak{n}_\phi \cap \D$ satisfy $0 \leq d \leq I$. For $x, y \in \mathfrak{n}_\phi$,
\begin{eqnarray*}
  \langle P_v\eta_\phi(x), \eta_\phi(yd) \rangle
  &=& \mean_{U \in \U(\D)} \langle \pi_\ell(vUv^*)\pi_r(U^*)\eta_\phi(x), \eta_\phi(yd) \rangle\\
  &=& \mean_{U \in \U(\D)} \langle \eta_\phi(vUv^*xU^*), \pi_r(d)\eta_\phi(y) \rangle\\
  &=& \mean_{U \in \U(\D)} \langle \pi_r(d)\eta_\phi(vUv^*xU^*), \eta_\phi(y) \rangle\\
  &=& \mean_{U \in \U(\D)} \langle \eta_\phi(vUv^*xU^*d), \eta_\phi(y) \rangle\\
  &=& \mean_{U \in \U(\D)} \langle \pi_\phi(vUv^*xU^*)\eta_\phi(d), \eta_\phi(y) \rangle\\
  &=& \langle \pi_\phi(vE(v^*x))\eta_\phi(d), \eta_\phi(y) \rangle\\
  &=& \langle \eta_\phi(vE(v^*x)d), \eta_\phi(y) \rangle\\
  &=& \langle \pi_r(d)\eta_\phi(vE(v^*x)), \eta_\phi(y) \rangle\\
  &=& \langle \eta_\phi(vE(v^*x)), \pi_r(d)\eta_\phi(y) \rangle\\
  &=& \langle \eta_\phi(vE(v^*x)), \eta_\phi(yd) \rangle.
\end{eqnarray*}
The equality (2) of part (b) now follows from Lemma 1.4.1. The remainder of part (b) follows from equation (2), which in turn implies (a).

Turning now to the proof of (c), suppose that $\xi \in \ran(P_v)$.
Then $\xi = \pi_\phi(v)\zeta$ for some $\zeta \in \ran(P)$.  For $d_1
\in \D$ and $d_2 \in \fn_\phi \cap \D$, we have that
\[ \pil(d_1)\pi_\phi(v)\eta_\phi(d_2) = \eta_\phi(d_1vd_2) =
\eta_\phi(vd_2v^*d_1v) = \pir(v^*d_1v)\pi_\phi(v)\eta_\phi(d_2).
\] Since $\eta(\fn_\phi \cap \D)$ is dense in $\ran(P)$, it follows
that
\[ \pil(d_1)\xi = \pil(d_1)\pi_\phi(v)\zeta =
\pir(v^*d_1v)\pi_\phi(v)\zeta = \pir(v^*d_1v)\xi,
\] and so $\pil(\D)\xi \subseteq \pir(\D)\xi$. Likewise $\pir(d_1)\xi
= \pil(vd_1v^*)\xi$, and so $\pir(\D)\xi \subseteq \pil(\D)\xi$.  The
fact that $\Z$ is generated by $\pil(\D)$ and $\pir(\D)$ yields
\[ \overline{\pil(\D)\xi} = \overline{\pir(\D)\xi} = \overline{\Z\xi}.
\]

We claim that $\pir(\D)|_{\ran(P_v)}$ is a MASA in $\B(\ran(P_v))$.
Indeed, $\pi_\ell(\D)|_{\ran(P)}$ is a MASA in $\B(\ran(P))$, since
$\pil(\cdot)|_{\ran(P)}$ is unitarily equivalent to $\pi_\omega$, the
semi-cyclic representation of $\D$ corresponding to $\omega := \phi|_{\D}$.
(The implementing unitary $U:\ran(P) \to \fH_\omega$ maps
$\eta_\phi(d)$ to $\eta_\omega(d)$ for all $d \in \fn_\phi \cap \D$.)
It follows that $\pil(\D)|_{\pil(v^*v)\ran(P)}$ is a MASA in
$\B(\pil(v^*v)\ran(P))$.  Now $\pir(\cdot)|_{\ran(P_v)}$ is unitarily
equivalent to $\pil(\cdot)|_{\pil(v^*v)\ran(P)}$.  (The implementing
unitary $V:\ran(P_v) \to \pil(v^*v)\ran(P)$ maps $\eta_\phi(vd)$ to
$\pil(v^*v)\eta_\phi(d)$ for all $d \in \fn_\phi \cap \D$.)  This
establishes the claim.

Now let $Q \in \B(\ran(P_v))$ be the orthogonal projection onto
$\overline{\pir(\D)\xi}$.  Then $Q \in (\pi_r(\D)|_{\ran(P_v)})' =
\pi_r(\D)|_{\ran(P_v)}$, and so there exists a projection $q \in \D$
such that $Q = \pi_r(q)|_{\ran(P_v)}$.  Define $h = vq$.  Then $h \in
\G\N(\M,\D)$, and we have
\begin{align*} \ran(P_h) &= \pi_\phi(h)\ran(P) = \pi_\phi(vq)\ran(P) =
\pi_\phi(v)\pil(q)\ran(P) = \pi_\phi(v)\pir(q)\ran(P)\\ &=
\pir(q)\pi_\phi(v)\ran(P) = \pir(q)\ran(P_v) = \ran(Q) =
\overline{\pir(\D)\xi} = \overline{\Z\xi}.
\end{align*} The fact that $P_h \leq P_v$ follows from the fact that
both are projections and $\ran(P_h)\subseteq \ran(P_v)$.

Finally we prove (d).  For $v, w \in \G\N(\M,\D)$ and $d_1, d_2 \in
\fn_\phi \cap \D$, we have that
\begin{align*} \langle \eta_\phi(vd_1), \eta_\phi(wd_2) \rangle &=
\phi(d_2^*w^*vd_1) = \phi(E(d_2^*w^*vd_1)) = \phi(d_2^*E(w^*v)d_1)\\
&= \omega(d_2^*E(w^*v)d_1) = \langle
\pi_\omega(E(w^*v))\eta_\omega(d_1), \eta_\omega(d_2) \rangle,
\end{align*} and so $P_v \perp P_w$ if and only if $E(w^*v) = 0$.
\end{proof}

\begin{theorem}\label{masa} The algebra $\Z$ is a MASA in
$\B(\fH_\phi)$.
\end{theorem}

\begin{proof} Let $0 \neq Q \in \Z'$ be a projection.  We first show 
there exists $0 \neq h \in
\G\N(\M,\D)$ so that $P_h \leq Q$.

Let $\zeta$ be a unit vector in the range of $Q$.
Corollary~\ref{sotdensecor} implies that there exists $w \in
\N(\M,\D)$ and $d \in \fn_\phi \cap \D$ so that
$\innerprod{\zeta,\eta_\phi(wd)} \neq 0$.  Writing the polar
decomposition, $w = v|w|$, we have $\eta_\phi(wd) =
\pi_\phi(v)\eta_\phi(|w|d) \in \ran(P_v)$. Hence $P_v\zeta\neq 0$.  By
Lemma~\ref{inZ}, $\overline{\Z P_v\zeta}$ is the range of $P_h$ for
some $h \in \G\N(\M,\D)$, and as $\ran(Q)$ is invariant
for $\Z$, we have $P_h \leq Q$.

As $P_h \in \Z \subseteq \Z'$, $Q - P_h \in \Z'$.  A Zorn's Lemma
argument now yields a maximal family $A \subseteq \G\N(\M,\D)$ 
such that (a) $\{P_v: v \in A\}$ is a pairwise
orthogonal family of projections; and (b) $P_v \leq Q$ for each $v\in
A$.  The maximality of $A$ ensures that $\join_{v \in A} P_v = Q$.  As
each $P_v \in \Z$, we conclude that $Q \in \Z$ as well.  Therefore
$\Z$ is a MASA.
\end{proof}

The following extends part of
\cite[Proposition~2.8]{FeldmanMooreErEqReII} to our context.
\begin{corollary}\label{modauto}  Let $\Delta$ be the modular
  operator and $\{\sigma_t^\phi\}_{t\in\bbR}$ be the modular
  automorphism group .  Then for each $t\in\bbR$, 
  $\Delta^{it}\in\U(\Z)$.  Moreover, $\sigma_t^\phi |_\D=\text{id}|_\D$ 
and  for $v\in\G\N(\M,\D)$, $h:=v^*\sigma_t^\phi(v)$ is a partial
isometry in $\D$ and 
  $\sigma_t^\phi(v)=vh$.
\end{corollary}
\begin{proof}
The proof of Lemma~\ref{relation} shows that $\Delta$ commutes with
each element of $\pil(\D)$,  hence for each $t\in\bbR$,
$\Delta^{it}\in\pil(\D)'$.  
Since $J\Delta J=\Delta^{-1}$ (\cite[LemmaVI.1.5(v)]{TakesakiThOpAlII}),
Lemma~\ref{relation} implies that $\Delta^{it}\in \pir(\D)'$.  
 Hence $\Delta^{it}\in\Z'=\Z$.

 For $D\in\D$,
 $\pi_\phi(\sigma_t^\phi(D))=\Delta^{it}\pil(D)\Delta^{-it}=\pi_\phi(D)$, so
 $\sigma_t^\phi$ fixes each element of $\D$.  Let $v\in \G\N(\M,\D)$
 and fix $t\in\bbR$.  Set $w=\sigma_t^\phi(v)$.  We show that
 $v^*w\in\D$ and that $w=v(v^*w)\in v\D$.  To see this, observe that
 for $d\in\D$ we have,
\[wdw^*=\sigma_t^\phi(vdv^*)=vdv^*.\] Therefore for $d\in\D$,
\[v^*wd=v^*(wdw^*)w=v^*(vdv^*)w=dv^*w.\] Since $\D$ is a MASA in $\M$,
$v^*w\in\D$.  Finally, $w=(ww^*)w=v(v^*w)$, as desired.
\end{proof}

We now turn to showing that $\D$ norms $\M$.  We need some general
preparation.  Recall that if $\C\subseteq \bh$ is a \cstar-algebra of
operators, then $\C$ is \textit{locally cyclic} if, for any $\eps>0$,
$n\in\bbN$, and vectors $\xi_1,\dots,\xi_n \in\H$, there is a vector
$\zeta\in \H$ and elements $T_1,\dots, T_n\in\C$ such that for $1\leq
i\leq n$, $\norm{T_i\zeta-\xi_i}<\eps$.

In our context, $\pi_\phi(\M)$ is locally cyclic.   Indeed, we may find
$x_i\in\fn_\phi$ with $\norm{\eta_\phi(x_i)-\xi_i}<\eps/2$. 
Lemma~\ref{sotdense} yields $d\in\D\cap\fn_\phi$ with
$\norm{\eta_\phi(x_i)-\eta_\phi(x_id)}<\eps/2$ for $1\leq i\leq n$;
then $\norm{\pi_\phi(x_i)\eta_\phi(d)-\xi_i}<\eps$.\footnote{A similar
  argument can be used to show that whenever a von Neumann algebra is
  in standard form, it is locally cyclic; we do not need that fact
  here.}  Also, when $\C\subseteq \bh$ is a MASA, $\C$ is locally
cyclic.  This can be proved directly, or one can argue as follows.
Decompose $\H$ into an orthogonal sum of cyclic subspaces,
$\H=\bigoplus_{i\in I} \overline{\C u_i}$ where $\{u_i\}_{i\in
  I}\subseteq \H$ is a family of unit vectors.  As in the proof of
\cite[Theorem~VII.2.7]{TakesakiThOpAlII}, define a faithful normal
semi-finite weight $\phi$ on the positive cone of $\C$ by
$\phi(T)=\sup\{\sum_{i\in F} \innerprod{Tu_i,u_i}: F\subseteq I \text{
  is finite}\}$.  Then the identity representation of $\C$ is
unitarily equivalent to the semi-cyclic representation $(\pi_\phi,
\H_\phi, \eta_\phi)$ and hence $\C$ is locally cyclic because
$(\C,\C)$ is a Cartan pair.

The following is the analog of~\cite[Lemma~2.15]{PittsNoAlAuCoBoIsOpAl} for Cartan pairs. 

\begin{corollary}\label{norming} If $(\M,\D)$ is a Cartan pair, then $\D$ norms $\M$ in the sense of
Pop-Sinclair-Smith~\cite{PopSinclairSmithNoC*Al}.
\end{corollary}
\begin{proof} The proof is an adaptation of the proof of \cite[Proposition~4.1]{SinclairSmithHoCoVNAlCaSu}, with the algebras $\M$, $\A$ and $\B$ of 
\cite[Proposition~4.1]{SinclairSmithHoCoVNAlCaSu} taken to be $\pi_\phi(\M)$, $\pil(\D)$ and $\pir(\D)$ respectively.

Since $\Z$ is a MASA in $\B(\fH_\phi)$, it norms $\B(\H_\phi)$ by \cite[Theorem~2.7]{PopSinclairSmithNoC*Al}.  Then 
$C^*(\A,\B)$ norms $\B(\H_\phi)$ (\cite[Lemma~2.3(c)]{PopSinclairSmithNoC*Al}).  

Let $X\in M_n(\pi_\phi(\M))$ satisfy $\norm{X}=1$ and let $\eps>0$.  
Then there exist $C_1, C_2\in M_{n,1}(C^*(\A,\B))$ such that 
\begin{equation}\label{pss1}
\max\{\norm{C_1},\norm{C_2}\}<1\dstext{and}\norm{C_2^*XC_1}>1-\eps.
\end{equation}
The proof now continues exactly as in the proof of
\cite[Proposition~4.1]{SinclairSmithHoCoVNAlCaSu}: replace the
inequality~(4.2) of~\cite{SinclairSmithHoCoVNAlCaSu} with \eqref{pss1}
and continue the Sinclair-Smith argument from there to show that
$\pil(\D)=\pi_\phi(\D)$ norms $\pi_\phi(\M)$.

\end{proof}

\section{A Spectral Theorem for Bimodules}

In this section, we provide a description of the support of a
$\D$-bimodule in terms of a projection in $\Z$, then use this to
characterize $\D$-bimodules closed in an appropriate topology.

\subsection{The Support of a Bimodule}

\begin{definition}\label{support} For any set $A \subseteq \M$, let
$\innerprod{A}$ be the $\D$-bimodule generated by $A$.
\begin{enumerate}
\item Given any $\D$-bimodule (not necessarily closed) $\S \subseteq
\M$, let
\[ \supp(\S) \in \B(\fH_\phi) \text{ be the orthogonal projection onto
} \overline{\pi_\phi(\S)\eta_\phi(\fn_\phi \cap \D)}.
\] Since $\overline{\pi_\phi(\S)\eta_\phi(\fn_\phi\cap\D)}$ is a
$\Z$-invariant subspace, $\supp(\S)$ is a projection in $\Z$.
\item For $T\in \M$, we define the \textit{support of $T$},
$\supp(T)$, to be the projection $\supp(\innerprod{T}) \in \Z$.
\item Given a projection $Q \in \Z$, the set
\[ \bimod(Q) := \{T \in \M: \supp(T) \leq Q\}
\] is a $\D$-bimodule.
\end{enumerate}
\end{definition}

\begin{remark}{Remark} The purpose of this remark is to outline the
relationship between the notion of support of a bimodule given above
with the notion of support of a bimodule found
in~\cite{MuhlySaitoSolelCoTrOpAl}.  For this, assume that $\M_*$ is
separable, that $\phi$ is a
faithful normal state on $\M$ and use the notation found
in~\cite{FeldmanMooreErEqReII}. By~\cite[Theorem~1]{FeldmanMooreErEqReII},
there exists a countable, standard equivalence relation $R$ on a
finite measure space $(X,\B,\mu)$, a cocycle $\sigma \in H^2(R,\bbT)$,
and an isomorphism of $\M$ onto $\MM(R,\sigma)$ which carries $\D$
onto the diagonal subalgebra $\AA(R,\sigma)$ of
$\mathbf{M}(R,\sigma)$.  We may therefore assume that $\M =
\MM(R,\sigma)$ and that $\D = \AA(R,\sigma)$.  With this
identification, $\M$ acts on the separable Hilbert space $L^2(R,\nu)$,
where $\nu$ is the right counting measure associated with $\mu$.
By~\cite[Proposition~2.9]{FeldmanMooreErEqReII}, $J\D J$ is an abelian
subalgebra of $\M'$ and $\Z = (J\D J \vee \D)''$ is a MASA in
$\B(L^2(R,\nu))$, with cyclic vector $\chi_{\Delta}$ (here $\Delta =
\{(x,x): x \in X\} \subseteq R$).  Each element $a \in \MM(R,\sigma)$
determines a measurable function $a\chi_\Delta$ on $R$, and the
support of such a function is a measurable subset of $R$ determined
uniquely up to null sets.  Projections in $\Z$ are in one-to-one
correspondence with $\nu$-measurable subsets of $R$ modulo null sets,
so we may as well regard the support of an element of $\MM(R,\sigma)$
as a projection in $\Z$.  The support of the $\D$-bimodule $\S$ is the
join of the support projections of the elements of $\S$.  Thus,
Definition~\ref{support} is a reformulation of the definition of the
support of a $\D$-bimodule from~\cite{MuhlySaitoSolelCoTrOpAl}, but
with the measure-theoretic considerations suppressed.
\end{remark}

The following observations will be used in the sequel.

\begin{lemma}\label{GNsupp} Let $h \in \G\N(\M,\D)$. Then $\supp(h) =
P_h$.
\end{lemma}

\begin{proof} Clearly
\[ \ran(P_h) = \overline{\pi_\phi(h)(\fn_\phi \cap \D)} \subseteq
\overline{\pi_\phi(\innerprod{h})(\fn_\phi \cap \D))},
\] and so $P_h \leq \supp(h)$. Conversely, since $\innerprod{h} =
\{hd: d \in \D\}$,
\[ \overline{\pi_\phi(\innerprod{h})(\fn_\phi \cap \D))} \subseteq
\overline{\pi_\phi(h)(\fn_\phi \cap \D)} = \ran(P_h),
\] and so $\supp(h) \leq P_h$.
\end{proof}

\begin{lemma}\label{supch} Let $Q \in \Z$ be a projection.  For
  $T\in\M$,
 the
following are equivalent:
\begin{enumerate}
\item $T \in \bimod(Q)$;
\item $\pi_\phi(T)\eta_\phi(\fn_\phi \cap \D) \subseteq \ran(Q)$;
\item $Q^\perp\pi_\phi(T)P=0$.
\end{enumerate} In particular, if $h \in \G\N(\M,\D)$, then $h \in
\bimod(Q)$ if and only if $P_h \leq Q$.
\end{lemma}

\begin{proof} As the equivalence of (b) and (c) is clear, we show only
  the equivalence of (a) and (b).  
Suppose $T \in \bimod(Q)$.  Then
$\pi_\phi(\innerprod{T})\eta_\phi(\fn_\phi \cap \D) \subseteq
\ran(Q)$, and (b) holds as $T \in \innerprod{T}$.

Conversely, if (b) holds, then for any $h, k \in \D$ and $d \in
\fn_\phi \cap \D$, we have
\[ \pi_\phi(hTk)\eta_\phi(d) = \pil(h)\pir(k)\pi_\phi(T)\eta_\phi(d)
\in \ran(Q)
\] because $\ran(Q)$ is $\Z$-invariant.  So
$\overline{\pi_\phi(\innerprod{T})\eta_\phi(\fn_\phi \cap \D)}
\subseteq \ran(Q)$; hence $T \in \bimod(Q)$.
\end{proof}

The Spectral Theorem for Bimodules from \cite{MuhlySaitoSolelCoTrOpAl}
may be reformulated as the following conjecture.

\begin{flexstate}{Conjecture}{Spectral Theorem for
Bimodules}\label{STB} If $\S$ is a $\sigma$-weakly closed
$\D$-bimodule in $\M$, then $\S = \bimod(\supp(\S))$, that is,
\begin{equation}\label{STBinc} \S = \{T \in \M:
\pi_\phi(T)\eta_\phi(\fn_\phi \cap \D) \subseteq
\overline{\pi_\phi(\S)\eta_\phi(\fn_\phi \cap \D)}\}.
\end{equation}
\end{flexstate}

\begin{remark}{Remarks} For these remarks, assume $\phi$ is a faithful
normal state, so that $\eta_\phi(I)$ is a cyclic and separating vector
for $\pi_\phi(\M)$.
\begin{enumerate}
\item Observe that replacing $\eta_\phi(\fn_\phi \cap \D)$ with
$\eta_\phi(I)$ in Definition~\ref{support} leaves the definition of
$\supp(\S)$ unchanged; this replacement may also be made in
\eqref{STBinc}.  Thus, the Spectral Theorem for Bimodules is the same
as the equality
\[ \S = \{T \in \M: \pi_\phi(T)\eta_\phi(I) \in
\overline{\pi_\phi(\S)\eta_\phi(I)}\}.
\]
\item What is known (see~\cite[Theorem~2.3]{LoginovSulmanHeInReW*Al})
is that when $\S$ is a $\sigma$-weakly closed subspace of $\M$, then
because $\pi_\phi(\M)$ has a separating vector, $\pi_\phi(\S)$ is
reflexive, that is,
\[ \pi_\phi(\S) = \{T \in \B(\fH_\phi): T\xi \in
\overline{\pi_\phi(\S)\xi} \text{ for every } \xi \in \fH_\phi\}.
\] The faithfulness of $\pi_\phi$ then yields
    \[ \S=\{T\in\M: \pi_\phi(T)\xi\in \overline{\pi_\phi(\S)\xi}
\text{ for every $\xi\in \fH_\phi$}\}.
\] Clearly,
\begin{align*} \S &= \{T \in \M: \pi_\phi(T)\xi \in
\overline{\pi_\phi(\S)\xi} \text{ for all } \xi \in \fH_\phi\}\\
&\subseteq \{T \in \M: \pi_\phi(T)\eta_\phi(I) \in
\overline{\pi_\phi(\S)\eta_\phi(I)}\}.
\end{align*} Thus, Conjecture~\ref{STB} holds if and only if the
inclusion is an equality.  (This is roughly the approach attempted in
\cite{MuhlySaitoSolelCoTrOpAl}.)
\item Since $\eta_\phi(I)$ is a cyclic and separating vector for
$\pi_\phi(\M)$, it is also cyclic and separating for $\pi_\phi(\M)'$.
If $T \in \M$ and $\pi_\phi(T)\eta_\phi(I) \in
\overline{\pi_\phi(\S)\eta_\phi(I)}$, then for each $Y\in
\pi_\phi(\M)'$, we have
\[ \pi_\phi(T)Y\eta_\phi(I) = Y\pi_\phi(T)\eta_\phi(I) \in
Y\overline{\pi_\phi(\S)\eta_\phi(I)} \subseteq
\overline{\pi_\phi(\S)Y\eta_\phi(I)}.
\] Hence
\[ \{T \in \M: \pi_\phi(T)\eta_\phi(I) \in
\overline{\pi_\phi(\S)\eta_\phi(I)}\} = \{T \in \M: \pi_\phi(T)\xi\in
\overline{\pi_\phi(\S)\xi} \text{ for all } \xi \in
\pi_\phi(\M)'\eta_\phi(I)\}.
\] Thus we see that Conjecture~\ref{STB} holds if and only if the
inclusion
\[ \{T \in \M: \pi_\phi(T)\xi \in \overline{\pi_\phi(\S)\xi} \text{
for all } \xi \in \fH_\phi\} \subseteq \{T \in \M: \pi_\phi(T)\xi\in
\overline{\pi_\phi(\S)\xi} \text{ for all } \xi \in
\pi_\phi(\M)'\eta_\phi(I)\}
\] is actually an equality.
\end{enumerate}
\end{remark}

\subsection{Topologies}

In this subsection we discuss the Bures and $L^2$ topologies on
$\M$. We begin with a fact well-known to experts in
non-commutative integration.
\begin{lemma}\label{vecFunct} Let $\M$ be a von Neumann algebra, let
$\phi$ be a faithful, semi-finite, normal weight on $\M$, and let
$(\pi_\phi,\fH_\phi,\eta_\phi)$ be the semi-cyclic representation of
$\M$ arising from $\phi$.  If $f \in \M_*$, then there are vectors
$\xi_1, \xi_2 \in \fH_\phi$ such that for every $x \in \M$, $f(x) =
\innerprod{\pi_\phi(x)\xi_1,\xi_2}$.
\end{lemma}

\begin{proof} By the polar decomposition for normal functionals on a
von Neumann algebra (\cite[Theorem~III.4.2(i)]{TakesakiThOpAlI}),
there exists a partial isometry $v \in \M$ and $\rho \in (\M_*)^+$
such that for each $x \in \M$,
\begin{equation}\label{vecFunct1} f(x)=\rho(xv).
\end{equation} Since $\pi_\phi$ puts $\M$ into standard form,
\cite[Theorem~IX.1.2]{TakesakiThOpAlII} shows there exists $\xi_2 \in
\fH_\phi$ such that for every $x \in \M$, $\rho(x) =
\innerprod{\pi_\phi(x)\xi_2,\xi_2}$.  Taking $\xi_1 :=
\pi_\phi(v)\xi_2$, the lemma follows from \eqref{vecFunct1}.
\end{proof}

As noted earlier, the semi-cyclic representation of $\D$ induced by
$\phi|_\D$ is unitarily equivalent to $(\pil,
\ran(P),\eta_\phi|_{\fn_\phi \cap \D})$.  Thus, given $f \in \D_*$
there are $\xi_1, \xi_2 \in \ran(P)$ so that for every $D \in \D$,
\[ f(D) = \innerprod{\pil(D)\xi_1,\xi_2}.
\]

\begin{lemma}\label{charBsemin} The two families of semi-norms on
$\M$,
\[ \{\M \ni T \mapsto \sqrt{\tau(E(T^*T))}: \tau \in (\D_*)^+\}
\dstext{and} \{\M \ni T \mapsto \norm{\pi_\phi(T)\xi}: \xi \in
\ran(P)\},
\] coincide.
\end{lemma}

\begin{proof} Given $\tau \in (\D_*)^+$, there exists $\xi \in
\ran(P)$ so that $\tau(d) = \innerprod{\pil(d)\xi,\xi}$.  Choose $h_n
\in \fn_\phi \cap \D$ so that $\eta_\phi(h_n) \rightarrow \xi$.  Then
\begin{align*} \tau(E(T^*T)) &= \innerprod{\pil(E(T^*T))\xi,\xi} =
\lim_{n \to \infty}
\innerprod{\pil(E(T^*T))\eta_\phi(h_n),\eta_\phi(h_n)}\\ &=
\lim_{n\rightarrow\infty} \norm{\pi_\phi(T)\eta_\phi(h_n)}^2 =
\norm{\pi_\phi(T)\xi}^2.
\end{align*} It follows that $\{\M \ni T \mapsto \sqrt{\tau(E(T^*T))}:
\tau \in (\D_*)^+\} \subseteq \{\M \ni T \mapsto
\norm{\pi_\phi(T)\xi}: \xi \in \ran(P)\}$.  The reverse inclusion is
left to the reader.
\end{proof}

We require two topologies on $\M$, both discussed
in~\cite{MercerBiOvCaSu}, but the second is extended slightly here.

\begin{definition}
\begin{enumerate}
\item The \textit{Bures} topology (see~\cite[page~48]{BuresAbSuvNAl})
on $\M$ is the locally convex topology generated by the family of
seminorms
\[ \fT_B := \{\M \ni T \mapsto \sqrt{\tau(E(T^*T))}: \tau \in
(\D_*)^+\} = \{\M \ni T \mapsto \norm{\pi_\phi(T)\xi}: \xi \in
\ran(P)\}.
\] We denote the Bures topology by $\tau_B$.
\item The \textit{$L^2$ topology} on $\M$ is the topology on $\M$
induced by the family of seminorms
\[ \{\M \ni T \mapsto \norm{\pi_\phi(T)\eta_\phi(d)}: d \in \fn_\phi
\cap \D\}.
\] We will use $(\M,L^2)$ to denote $\M$ equipped with the $L^2$
topology.
\end{enumerate}
\end{definition}

\begin{remark}{Remark} When $\phi$ is a faithful normal state on $\M$,
the $L^2$ topology is determined by the single seminorm $\M \ni T
\mapsto \norm{\pi_\phi(T)\eta_\phi(I)} = \norm{\eta_\phi(T)}$, and in
this case, the $L^2$ topology was considered by Mercer
in~\cite{MercerBiOvCaSu}.  When $\D$ is isomorphic to
$L^\infty(X,\mu)$, $\fn_\phi \cap \D$ may be thought of as $L^2 \cap
L^\infty$, so it is tempting to use the term ``bounded Bures
topology'' instead of the $L^2$-topology, but we have chosen to stay
with the nomenclature used by Mercer.
\end{remark}

Clearly the $L^2$-topology is coarser than the Bures topology, which
in turn is coarser than the norm topology, so the dual spaces of $\M$
equipped with these topologies satisfy
\[ \dual{(\M,L^2)} \subseteq \dual{(\M,\tau_B)} \subseteq
\dual{(\M,\text{norm})}.
\]

\begin{corollary}\label{vector} For $\xi \in \ran(P)$ and $\zeta \in
\fH_\phi$, the functional $T \mapsto \innerprod{\pi_\phi(T)\xi,\zeta}$
belongs to $\dual{(\M,\tau_B)}$.
\end{corollary}

\begin{proof} By the Cauchy-Schwartz inequality,
$|\innerprod{\pi_\phi(T)\xi,\zeta}| \leq
\norm{\pi_\phi(T)\xi}\norm{\zeta}$.  By Lemma~\ref{charBsemin}, the
first term in the product is one of the seminorms defining the Bures
topology.  The corollary follows.
\end{proof}

We now show that every Bures-continuous linear functional is of this
form.

\begin{lemma}\label{wstcont} Let $f$ be a linear functional on $\M$.
\begin{enumerate}
\item If $f$ is $\tau_B$ continuous, then there exist $\xi \in
\ran(P)$ and $\zeta \in \fH_\phi$ such that
\[ f(T) = \innerprod{\pi_\phi(T)\xi,\zeta}.
\] In particular, $f$ is $\sigma$-weakly continuous on $\M$.
\item If $f \in \dual{(\M,L^2)}$, then there exists $d \in \fn_\phi
\cap \D$ and $\zeta\in \fH_\phi$ such that
\[ f(T) = \innerprod{\pi_\phi(T)\eta_\phi(d),\zeta}.
\]
\end{enumerate} Moreover, $\dual{(\M,\tau_B)}$ and $\dual{(\M,L^2)}$
are norm-dense in $\M_*$.
\end{lemma}

\begin{proof} For the first statement, we give a standard argument
(see the proof of~\cite[Lemma~II.2.4]{TakesakiThOpAlI}).  Since $f$ is
$\tau_B$ continuous, there exist $p_1, \dots, p_n \in \fT_B$ such that
for every $T\in \M$, we have
\[ |f(T)| \leq \sum_{k=1}^n p_k(T)
\] (see \cite[Theorem~IV.3.1]{ConwayCoFuAn}).  Write $p_k(T) =
\sqrt{\omega_k(E(T^*T))}$, where the $\omega_k$ are positive normal
functionals on $\D$.  Set $\omega = n\sum_{k=1}^n \omega_k$ and let
$p(T) = \sqrt{\omega(E(T^*T))}$.  By the Cauchy-Schwartz inequality,
\begin{equation}\label{HBextend} |f(T)| \leq p(T).
\end{equation} By Lemma~\ref{charBsemin}, there is a vector $\xi \in
\ran(P)$ such that for $T \in \M$,
\[ p(T) = \norm{\pi_\phi(T)\xi}.
\] By~\eqref{HBextend}, the map
\[ \pi_\phi(T)\xi \mapsto f(T)
\] is bounded on the subspace $\{\pi_\phi(T)\xi: T \in \M\} \subseteq
\fH_\phi$.  The Riesz Representation Theorem implies that there exists
a vector $\zeta \in \overline{\{\pi_\phi(T)\xi: T \in \M\}} \subseteq
\fH_\phi$ such that
\[ f(T) = \innerprod{\pi_\phi(T)\xi,\zeta}.
\] Hence $f$ is $\sigma$-weakly continuous on $\M$.

The proof of statement (b) is similar and left to the reader.

Suppose $T \in \M$ and $f(T) = 0$ for every $f \in \dual{(\M,L^2)}$.
For every $d \in \fn_\phi \cap \D$, the map $\M \ni S \mapsto
\innerprod{\pi_\phi(S)\eta_\phi(d),\pi_\phi(T)\eta_\phi(d)}$ belongs
to $\dual{(\M,L^2)}$, so
$\innerprod{\pi_\phi(T)\eta_\phi(d),\pi_\phi(T)\eta_\phi(d)} = 0$.
Hence $\innerprod{\pil(E(T^*T))\eta_\phi(d),\eta_\phi(d)} = 0$ for
each $d \in \fn_\phi \cap \D$.  This implies that $E(T^*T) = 0$, and
hence $T = 0$.  It follows that $\dual{(\M,L^2)}$ is weakly dense in
$\M_*$.  As $\dual{(\M,L^2)}$ is a subspace, its weak and norm
closures coincide, so
\[ \M_* = \overline{\dual{(\M,L^2)}}^{\sigma(\M_*,\M)} =
\overline{\dual{(\M,L^2)}}^{\norm{}}.
\] Thus, $\dual{(\M,L^2)}$ is norm dense in $\M_*$.  Since every $L^2$
continuous linear functional is Bures continuous, the Bures continuous
linear functionals are norm dense in $\M_*$ also.
\end{proof}

\begin{corollary}\label{biggerclosure} Let $C$ be a convex set in
$\M$.  Then $\overline{C}^{\sigma\text{-weak}} \subseteq
\overline{C}^{\text{Bures}} \subseteq \overline{C}^{L^2}$, with
equality throughout if $C$ is also a bounded set.
\end{corollary}

\subsection{$\sigma$-Weakly Closed Bimodules}

\begin{lemma}\label{split} Let $\S \subseteq \M$ be a $\sigma$-weakly
closed $\D$-bimodule.  Then the following statements hold.
\begin{enumerate}
\item If $u \in \N(\M,\D)$, there exists a projection $Q \in \D$ such
that $uQ \in \S$ and $uQ^\perp$ satisfies $E((uQ^\perp)^*S) = 0$ for
every $S \in \S$.
\item If $X \in \bimod(\supp(\S))$, then for every $u \in
\N(\M,\D)$, $uE(u^*X) \in \S$.
\item Let $\S_B$ be the Bures closure of $\S$.  Then
$\supp(\S)=\supp(\S_B)$.
\end{enumerate}
\end{lemma}

\begin{proof} Let $u \in \N(\M,\D)$, and set $J := \{d \in \D: ud \in
\S\}$.  Since $\S$ is a bimodule, $J$ is an ideal in $\D$, and the
fact that $\S$ is $\sigma$-weakly closed ensures that $J$ is
also $\sigma$-weakly closed.  Therefore, there exists a unique projection $Q
\in \D$ such that $J =\D Q$.  Obviously, $Q \in J$ and $uQ^\perp \in
\N(\M,\D)$.  Proposition~\ref{weaknormalizerexists} shows that if $S
\in \S$, then $uQ^\perp E((uQ^\perp)^*S) \in \S$.  Thus $uQ^\perp
E((uQ^\perp)^*S) = uQ^\perp E(u^*S) \in \S$, and hence $Q^\perp
E(u^*S) \in J$.  It follows that $0 = Q^\perp E(u^*S) =
E((uQ^\perp)^*S)$, as desired.

Turning now to part (b), suppose first $u \in \G\N(\M,\D)$ and $X \in
\bimod(\supp(\S))$.  Then $\pi_\phi(X)\eta_\phi(\fn_\phi \cap \D)
\subseteq \overline{\pi_\phi(\S)\eta_\phi(\fn_\phi \cap \D)}$.  Let
$Q$ be the projection obtained as in part (a).  For any $S \in \S$ and
$d \in \fn_\phi \cap \D$, using Lemma~\ref{inZ}(b), we have
\[ P_{uQ^\perp}(\pi_\phi(S)\eta_\phi(d)) = \pi_\phi(uQ^\perp
E((uQ^\perp)^*S))\eta_\phi(d) = 0.
\] Hence, for any $S \in \S$ and $h \in \fn_\phi \cap \D$,
\[ \norm{P_{uQ^\perp}(\pi_\phi(X)\eta_\phi(h))}
=\norm{P_{uQ^\perp}(\pi_\phi(X)\eta_\phi(h)) -
P_{uQ^\perp}(\pi_\phi(S)\eta_\phi(d))} \leq
\norm{\pi_\phi(X)\eta_\phi(h) - \pi_\phi(S)\eta_\phi(d)}.
\] Holding $h$ fixed and taking the infimum over $S \in \S$ and $d \in
\fn_\phi \cap \D$, the hypothesis on $X$ gives
\[ 0 = P_{uQ^\perp}(\pi_\phi(X)\eta_\phi(h)) = \pi_\phi(uQ^\perp
E((uQ^\perp)^*X))\eta_\phi(h) = \pi_\phi(uE(u^*X)Q^\perp)\eta_\phi(h).
\] Setting $y := uE(u^*X)Q^\perp$, this shows that for every $h \in
\fn_\phi \cap \D$ we have
\[ 0 = \phi(E(h^*y^*yh)) = \phi(h^*hE(y^*y)).
\] Thus, for every $\tau \in \D_*^+$, $\tau(E(y^*y)) = 0$.  This shows
that $E(y^*y) = 0$, and by faithfulness of $E$, $y = 0$; thus,
$uE(u^*X)Q^\perp = 0$.  Hence
\[ uE(u^*X) = uE(u^*X)Q \in \S.
\]

Now let $u\in\N(\M,\D)$, with $u\neq 0$ (the case when $u=0$ is
trivial).  If $u=w|u|$ is the polar decomposition of $u$,  
Lemma~\ref{polarnormal} gives $w\in\G\N(\M,\D)$.   Then $|u|^2=u^*u\in\D$ and
$wE(w^*X)\in\S$.  Since
$$uE(u^*X)=wE(w^*X)u^*u\in \S,$$  the proof of (b) is complete.

To show that (c) holds, we must show that
$\overline{\pi_\phi(\S)\eta_\phi(\fn_\phi \cap \D)} =
\overline{\pi_\phi(\S_B)\eta_\phi(\fn_\phi \cap \D)}$. Since $\S
\subseteq \S_B$, we obtain $\overline{\pi_\phi(\S)\eta_\phi(\fn_\phi \cap
\D)} \subseteq \overline{\pi_\phi(\S_B)\eta_\phi(\fn_\phi \cap \D)}$.
If $T \in \S_B$, we may find a net $(T_\lambda)$ in $\S$ converging in the 
Bures topology to $T$.  Then $T_\lambda \stackrel{L^2}{\longrightarrow} T$,
and hence given $d \in \fn_\phi \cap \D$,
$\pi_\phi(T_\lambda)\eta_\phi(d)\rightarrow \pi_\phi(T)\eta_\phi(d)$.
Therefore, $\pi_\phi(T)\eta_\phi(d)\in
\overline{\pi_\phi(\S)\eta_\phi(d)}$.  Thus,
$\overline{\pi_\phi(\S_B)\eta_\phi(\fn_\phi \cap \D)} \subseteq
\overline{\pi_\phi(\S)\eta_\phi(\fn_\phi \cap \D)}$ and part (c)
follows.
\end{proof}

\begin{corollary}\label{GNcapS} Let $\S \subseteq \M$ be a
$\sigma$-weakly closed $\D$-bimodule and $h \in \G\N(\M,\D)$.  Then $h
\in \S$ if and only $P_h \leq \supp(\S)$.  Thus $\supp(\S) =
\bigvee_{h \in \S \cap \G\N(\M,\D)} P_h$.
\end{corollary}

\begin{proof} Suppose $h \in \S$. Then $h \in \bimod(\supp(\S))$, and so $P_h \leq \supp(\S)$, by Lemma 2.1.4. Conversely, suppose $P_h \leq \supp(\S)$. Then, again by Lemma 2.1.4, $h \in \bimod(\supp(\S))$. By Lemma 2.3.1(b), $h = hE(h^*h) \in \S$.

By the proof of Theorem~\ref{masa}, $\supp(\S) = \bigvee_{h \in A}
P_h$, for some $A \subseteq \G\N(\M,\D)$.  For $h \in A$, $P_h \leq
\supp(\S)$, and so $h \in \S$. Thus
\[ \supp(\S) = \bigvee_{h \in A} P_h \leq \bigvee_{h \in \S \cap
\G\N(\M,\D)} P_h \leq \supp(\S).
\]
\end{proof}

\subsection{$\D$-Orthogonality}

\begin{definition} A non-empty set $\E \subseteq \G\N(\M,\D)
\backslash \{0\}$ is called \textit{$\D$-orthogonal} if for every
$v_1, v_2 \in \E$ with $v_1 \neq v_2$, $E(v_1^*v_2) = 0$
(equivalently, $P_{v_1} \perp P_{v_2}$, by Lemma~\ref{inZ}(d)).
\end{definition}

A simple Zorn's Lemma argument shows the existence of a maximal
$\D$-orthogonal set.

\begin{remark}{Remark}\label{flip}  Notice that for $v_1, v_2\in\G\N(\M,\D)$,
  $v_1$ and $v_2$ are $\D$-orthogonal if and only if $v_1^*$ and
  $v_2^*$ are $\D$-orthogonal.  Indeed, $E(v_1^*v_2)=0$ implies
  $0=v_1E(v_1^*v_2)v_1^*=E(v_1v_1^*v_2v_1^*)=E(v_2v_1^*)$; the
  converse is similar.
\end{remark}  

\begin{lemma}\label{SOTconv} Let $\E \subseteq \G\N(\M,\D) \backslash
\{0\}$ be a maximal $\D$-orthogonal set. Then $\sum_{u \in \E} P_u =
I$, where the sum converges strongly in $\B(\fH_\phi)$.
\end{lemma}

\begin{proof} Let $Q = \sum_{u \in \E} P_u \in \Z$.  If $I - Q \neq
0$, then by the proof of Theorem~\ref{masa}, there exists $0 \neq h
\in \G\N(\M,\D)$ such that $P_h \leq I - Q$.  Then $P_h \perp P_u$ for
all $u \in \E$, contradicting  maximality of $\E$.
\end{proof}

The following is an adaptation of a result of  Mercer to our
context.  
\begin{proposition}[cf. {\cite[Theorem~4.4]{MercerBiOvCaSu}}]\label{BConv} Let $\E \subseteq \G\N(\M,\D)
\backslash \{0\}$ be a maximal $\D$-orthogonal set and let $\Gamma$ be
the set of all finite subsets of $\E$ directed by inclusion. Fix $X
\in \M$.  For $F \in \Gamma$, let
\[ X_F = \sum_{u\in F} uE(u^*X).
\] Then $(X_F)_{F \in \Gamma}$ is a net which converges in the Bures
topology to $X$.
\end{proposition}

\begin{proof} Let $d \in \fn_\phi \cap \D$.  Lemma~\ref{inZ}(b) gives,
\[ \pi_\phi(X_F)\eta_\phi(d) = \sum_{u \in F} \eta_\phi(uE(u^*Xd)) =
\sum_{u\in F} P_u\eta_\phi(Xd)=\sum_{u\in F}
P_u\pi_\phi(X)\eta_\phi(d),
\] and hence for every $\xi \in \overline{\eta_\phi(\fn_\phi \cap
\D)}$,
\[ \pi_\phi(X_F)\xi = \sum_{u\in F} P_u\pi_\phi(X)\xi.
\] Since $I = \sum_{u\in\E} P_u$ (where the sum converges strongly in
$\B(\fH_\phi)$), for every $\xi \in \overline{\eta_\phi(\fn_\phi \cap
\D)}$,
\[ \pi_\phi(X_F)\xi \rightarrow \pi_\phi(X)\xi.
\] Therefore, $X_F \stackrel{\text{Bures}}{\rightarrow} X$.
\end{proof}

\subsection{A Characterization of Bures Closed Bimodules}

The following is a version of the Spectral Theorem for Bimodules,
which characterizes Bures (or $L^2$) closed bimodules.

\begin{theorem}\label{newSTB} Let $\S \subseteq \M$ be a
$\D$-bimodule.  Then the following statements are equivalent:
\begin{enumerate}
\item $\S = \bimod(\supp(\S))$;
\item $\S$ is $L^2$-closed;
\item $\S$ is Bures-closed;
\item $\S$ is the smallest Bures-closed $\D$-bimodule containing $\S
\cap \G\N(\M,\D)$.
\end{enumerate}
\end{theorem}

\begin{proof} Suppose (a) holds.  Let $(T_\lambda)$ in $\S$ be such
that $T_\lambda \stackrel{L^2}{\longrightarrow} T \in \M$.  Then given
$d \in \fn_\phi \cap \D$, $\pi_\phi(T_\lambda)\eta_\phi(d) \rightarrow
\pi_\phi(T)\eta_\phi(d)$.  Therefore, $\pi_\phi(T)\eta_\phi(d) \in
\overline{\pi_\phi(\S)\eta_\phi(d)}$.  Then Lemma~\ref{supch} gives $T
\in \bimod(\supp(\S)) = \S$.  Thus $\S$ is $L^2$-closed.

As the Bures topology is stronger than the $L^2$-topology, we see that
(b) $\Rightarrow$ (c).

We now establish (c) $\Rightarrow$ (a).  Suppose $X \in
\bimod(\supp(\S))$.  Let $\E \subseteq \G\N(\M,\D) \backslash \{0\}$
be a maximal $\D$-orthogonal set.  By Lemma~\ref{split}(b) and
Proposition~\ref{BConv}, $X_F \in \S$ and $X_F
\stackrel{\text{Bures}}{\rightarrow} X$; hence $X \in
\overline{\S}^{\text{Bures}} = \S$.  Thus, $\bimod(\supp(\S))\subseteq
\S$.  As the reverse inclusion is obvious, (a) holds.

Let $\S_1 = \innerprod{\S \cap \G\N(\M,\D)}_B$, the smallest
Bures-closed $\D$-bimodule containing $\S \cap \G\N(\M,\D)$.  If $\S =
\S_1$, then $\S$ is Bures closed, thus (d) $\Rightarrow$ (c).
Conversely, suppose $\S$ is Bures-closed.  Then $\S_1 \subseteq \S$,
clearly.  On the other hand, $\S \cap \G\N(\M,\D) \subseteq \S_1 \cap
\G\N(\M,\D)$, which implies $\supp(\S) \subseteq \supp(\S_1)$, by
Corollary~\ref{GNcapS}, and so $\S = \bimod(\supp(\S)) \subseteq
\bimod(\supp(\S_1)) = \S_1$, using the equivalence of (a) and (c).
\end{proof}

\begin{corollary}\label{BuresClosure} Let $\S \subseteq \M$ be a
$\sigma$-weakly closed $\D$-bimodule.  Then
$\overline{\S}^{\text{Bures}} = \bimod(\supp(\S))$.
\end{corollary}

\begin{proof} By Theorem~\ref{newSTB} and Lemma~\ref{split}(c),
\[ \overline{\S}^{\text{Bures}} =
\bimod(\supp(\overline{\S}^{\text{Bures}})) = \bimod(\supp(\S)).
\]
\end{proof}

Let $\L$ be a commutative subspace lattice acting on the Hilbert space
$\H$.  Consider the family $\R$ of all $\sigma$-weakly closed
subalgebras $\A$ of $\bh$ such that $\A \cap \A^* = \L'$ and $\lat(\A)
= \L$.  Arveson~\cite[Theorem~2.1.8]{ArvesonOpAlInSu} showed that
relative to set inclusion, the family $\R$ has a minimal element,
$\A_{\text{min}}(\L)$, and $\alg(\L)$ is the maximal element of $\R$.
The following proposition has the same flavor.

\begin{proposition}\label{specsyn} Let $Q \in \Z$ and let $\fB$ be the
set of all $\sigma$-weakly closed $\D$-bimodules $\S \subseteq \M$
such that $\supp(\S) = Q$.  Then $\overline{\innerprod{\bimod(Q) \cap
\G\N(\M,\D)}}^{\sigma\text{-weak}}$ is the minimal element of $\fB$
and $\bimod(Q)$ is the maximal element of $\fB$.
\end{proposition}

\begin{proof} First we show that $\supp(\bimod(Q)) = Q$.  Indeed, for
$h \in \G\N(\M,\D)$,
\[ h \in \bimod(Q)  \iff P_h \leq Q,
\] by Lemma~\ref{supch}.  Therefore, 
\[ \supp(\bimod(Q)) = \bigvee\{P_h: h \in \bimod(Q) \cap \G\N(\M,\D)\}
= \bigvee\{P_h: h \in \G\N(\M,\D), ~ P_h \leq Q\} = Q.
\] Thus $\bimod(Q) \in \fB$.

Now let $\S_0 = \overline{\innerprod{\bimod(Q) \cap
\G\N(\M,\D)}}^{\sigma\text{-weak}}$, a $\sigma$-weakly closed
$\D$-bimodule in $\M$. Then
\[ \bimod(Q) \cap \G\N(\M,\D) \subseteq \S_0 \cap \G\N(\M,\D)
\subseteq \bimod(Q) \cap \G\N(\M,\D).
\] 
Corollary~\ref{GNcapS} gives $\supp(\S_0) = \supp(\bimod(Q)) = Q$, 
which implies $\S_0 \in \fB$.

Finally, if $\S \in \fB$, then for $h \in \G\N(\M,\D)$,
\[ h \in \S \iff P_h \leq \supp(\S) = Q \iff h \in \bimod(Q),
\] by Corollary~\ref{GNcapS} and Lemma~\ref{supch}.   Therefore,
\[ \S_0 = \overline{\innerprod{\bimod(Q) \cap
\G\N(\M,\D)}}^{\sigma\text{-weak}} = \overline{\innerprod{\S \cap
\G\N(\M,\D)}}^{\sigma\text{-weak}} \subseteq \S \subseteq
\bimod(\supp(\S)) = \bimod(Q).
\]
\end{proof}

\begin{remark}{Remark} By Lemma~\ref{polarnormal},
\[ \innerprod{\bimod(Q) \cap \G\N(\M,\D)} = \spn(\bimod(Q) \cap
\N(\M,\D)),
\] and so $\overline{\spn}^{\sigma\text{-weak}}(\bimod(Q) \cap
\N(\M,\D))$ is another expression for the minimal element of $\fB$.
\end{remark}

\begin{remark}{Remark} Using the failure of spectral synthesis for an
appropriate locally compact abelian group, in~\cite{ArvesonOpAlInSu},
Arveson constructed a commutative subspace lattice $\L$ for which
$\A_{\text{min}}(\L) \subsetneq \alg(\L)$.  This, together with
Proposition~\ref{specsyn}, suggests that Conjecture~\ref{STB} may not
hold in general.
\end{remark}

While our context differs from that of~\cite{ArvesonOpAlInSu}, the
parallels are sufficiently strong that we make the following
definition.

\begin{definition}\label{synthetic} Let $\S\subseteq \M$ be a
  $\sigma$-weakly closed $\D$-bimodule.  We say $\S$ is
  \textit{synthetic}, or satisfies \textit{spectral synthesis}, if the
  minimal and maximal $\sigma$-weakly closed bimodules with
  $\supp(\S)$ coincide, that is, if
\[\overline{\spn}^{\sigma\text{-weak}}(\S \cap
\N(\M,\D))=\S=\overline{\S}^{\text{Bures}}.\] 
\end{definition}  

\begin{remark}{Remark}\label{synagree}    Let $\A$ be a CSL algebra.
  We wish to point out that when $\A$ contains a Cartan MASA in $\bh$,
  our notion of synthesis and Arveson's notion coincide.

  Let $\H$ be a separable Hilbert space, let $\{e_n\}$ be an
  orthonormal basis for $\H$ and let $\D$ be the atomic MASA of all
  operators diagonal with respect to this basis.  Then $(\bh,\D)$ is a
  Cartan pair (all Cartan MASAs in $\bh$ are of this form).  Let
  $e_ie_j^*$ denote the rank-one operator $\xi\mapsto
  \innerprod{\xi,e_j}e_i$.  Taking $\phi$ to be the tracial weight on
  $\bh$, $\fH_\phi$ is the set of Hilbert-Schmidt operators.  Each
  minimal projection in $\Z\subseteq \B(\fH_\phi)$ has range $\bbC
  \eta_\phi(e_ie_j^*)$ for some $i,j\in\bbN$, and it follows that
  $\Z\subseteq \B(\fH_\phi)$ is an atomic MASA.
  
  If $\A\subseteq \bh$ is a $\sigma$-weakly closed algebra with
  $\D\subseteq \A$, then for each finite-rank projection $P\in\D$,
  $P\A P$ is spanned by $\{PvP: v\in\A\cap \G\N(\bh,\D)\}$.  Since $I$
  may be written as the strong limit of an increasing sequence of such
  projections, the span of the rank one operators contained in $\A$ is
  $\sigma$-weakly dense in $\A$.  Using the description of the atoms
  of $\Z$ from above, one shows $\A$ is synthetic in the sense of
  Definition~\ref{synthetic}.
  Moreover,~\cite[Theorem~23.7]{DavidsonNeAl} shows $\A$ is a
  completely distributive CSL algebra.  Hence
  by~\cite[Corollary~23.9]{DavidsonNeAl}, $\A$ is synthetic in
  Arveson's sense as well.
\end{remark}

The following consequence of Theorem~\ref{newSTB} and the proof of
Proposition~\ref{specsyn} is
worth noting; we leave the proof to the reader.

\begin{theorem}\label{lattice}
Let  $\fS$ be the
lattice of all Bures-closed $\D$-bimodules of $\M$ (where $\meet$ is
intersection and $\join$ is Bures-closed span) and 
let $\fL$ be the projection lattice of $\Z$.  The maps $\bimod:
\fL\rightarrow \fS$ and $\supp: \fS\rightarrow \fL$ are lattice
isomorphisms and $(\bimod)^{-1}=\supp$.
\end{theorem}

We close this section by showing that the class of von Neumann
subalgebras which lie between $\D$ and $\M$ is a class of
$\sigma$-weakly closed $\D$-bimodules which is well-behaved with
respect to the operations of $\bimod$ and $\supp$.  Suppose $\A$ is a
von Neumann algebra with $\D\subseteq \A\subseteq \M$.  Then 
$\A_0:=\overline{\spn}^{\sigma\text{-weak}}(\G\N(\M,\D)\cap\A)$ is
also a von Neumann algebra.  A much less obvious fact, contained in
Theorem~\ref{aoi} below, is that the Bures-closure,
$\A_B:=\overline{\A}^{\text{Bures}}$ is also a von Neumann algebra.
By Proposition~\ref{specsyn}, $\A_0$ and $\A_B$ are the minimal and
maximal $\sigma$-weakly closed $\D$-bimodules with $\supp(\A)$.  It is
somewhat surprising that actually $\A$ is Bures closed, so that
\[\A_0=\A=\A_B=\bimod(\supp(\A)).\] Thus, the class of von Neumann
algebras which lie between $\D$ and $\M$ is a class of
$\D$-bimodules for
which
 Conjecture~\ref{STB} (the spectral theorem
  for bimodules) holds, and for which each element of the class is
  synthetic. 

We now prove these facts, and somewhat more, by extending, and
providing a new proof of, a theorem of Aoi~\cite{AoiCoEqSuInSu}.  Aoi
attributes the statement of his theorem to unpublished work of
C. Sutherland.  Aoi's proof uses the Feldman-Moore formalism and
therefore requires that the von Neumann algebras involved have
separable predual.  Our proof  allows us to
eliminate the separability hypothesis, and also to give a description of
the conditional expectation.

\begin{theorem}[cf. {\cite[Theorem~1.1]{AoiCoEqSuInSu}}]\label{aoi} Let
  $(\M,\D)$ be a Cartan pair, and suppose $\A$ is a von Neumann
  algebra such that $\D\subseteq \A\subseteq \M$.  Then 
 $(\A,\D)$ is a
  Cartan pair, and there exists a unique faithful normal conditional
  expectation $\Phi$ of $\M$ onto $\A$.  In addition, 
$\A$ is a synthetic $\D$-bimodule, 
and the following
  statements hold.
\begin{enumerate} 
\item For each $x\in\fn_\phi$,
   $\Phi(x)\in\fn_\phi$ and
\begin{equation}\label{PhiForm}
\eta_\phi(\Phi(x))=\supp(\A) \eta_\phi(x).
\end{equation}
\item For each $T\in\M$,
\begin{equation}\label{PhiFormB}
\pi_\phi(\Phi(T))P=\supp(\A)\pi_\phi(T) P 
\end{equation} and $\Phi$ is Bures-continuous.
\item If $\E\subseteq \G\N(\A,\D)$ is a maximal $\D$-orthogonal
  family, then for every $x\in\M$, $\Phi(x)$ is the Bures-convergent sum,
\[\Phi(x)=\sum_{u\in \E} uE(u^*x).\]
\end{enumerate}  
\end{theorem}
\begin{proof}
Let $\A_0:=\overline{\spn}^{\sigma\text{-weak}}(\G\N(\M,\D)\cap\A)$.  Then
$\A_0$ is a von Neumann algebra such that 
\[\D\subseteq \A_0\subseteq \A\subseteq \M.\]  
Here is the plan of the proof.  Our first step is to show the
existence of a unique faithful, normal conditional expectation $\Phi$
of $\M$ onto $\A_0$.  Step 2 will show that parts (a) and (b) hold.  We
then show $\A$ is synthetic, that is,
 \begin{equation}\label{ABclosed}
\A_0=\A=
\overline{\A}^{\text{Bures}}.
\end{equation}  Afterwards, we show $(\A,\D)$ is a Cartan pair, and
then conclude the proof by 
verifying (c) holds.

Let $\sigma_t^\phi$ be the modular automorphism group arising from
$\phi$.  Since the span of $\G\N(\M,\D)\cap \A_0$ is $\sigma$-weakly
dense in $\A_0$, Corollary~\ref{modauto} implies that for every
$t\in\bbR$,
\begin{equation}\label{modinv}
\sigma_t^\phi(\A_0)=\A_0.
\end{equation}

Next we  
 show that $\phi|_{\A_0}$ is semi-finite on $\A_0$. 
 Let  
\[\G\Nsf(\M,\D):=\G\N(\M,\D)\cap
\fn_\phi\dstext{and}\G\Nsf(\A_0,\D):=\G\Nsf(\M,\D)\cap \A_0.\]  
Since $\fn_\phi\cap \D$ is
$\sigma$-weakly dense in $\D$, we may find $d_\lambda\in\D\cap
\fn_\phi$ which converges to $I_\D$ $\sigma$-weakly.  Thus for any
$v\in\G\N(\M,\D)\cap \A_0$,  $v=\lim vd_\lambda$ which gives,
\[\overline{\G\Nsf(\A_0,\D)}^{\sigma\text{-weak}} \supseteq \G\N(\M,\D)\cap \A_0.\]
As $\spn(\G\Nsf(\A_0,\D))\subseteq \fn_\phi$, we conclude that $\phi$ is
semi-finite on $\A_0$.

An application of~\cite[Theorem~IX.4.2]{TakesakiThOpAlII} yields the
existence and uniqueness of a normal conditional expectation
$\Phi:\M\rightarrow\A_0$ such that $\phi\circ \Phi=\phi$.  Note that $\Phi$ is
faithful because $\phi$ is.

We now show the formulas~\eqref{PhiForm} and~\eqref{PhiFormB} hold
 for the conditional expectation $\Phi$ just
constructed.  If $x\in\fn_\phi$, then $\phi(\Phi(x)^*\Phi(x))\leq
\phi(\Phi(x^*x))=\phi(x^*x)<\infty$, so $\fn_\phi$ is invariant under
$\Phi$. This calculation also shows the map $\eta_\phi(\fn_\phi)\ni
\eta_\phi(x) \mapsto \eta_\phi(\Phi(x)) $ is norm decreasing on
$\eta_\phi(\fn_\phi)$.  The facts that $\Phi$ is an idempotent linear
map and $\Phi(x)^*=\Phi(x^*)$ for every $x\in\M$ imply that this map
extends to a projection $Q\in\B(\fH_\phi)$ such that
$Q\eta_\phi(x)=\eta_\phi(\Phi(x))$ for every $x\in\fn_\phi$.

Notice $\pi_\phi(\A_0)\eta_\phi(\fn_\phi\cap \D)\subseteq \ran(Q)$.
By definition,
$\ran(\supp(\A_0))=\overline{\pi_\phi(\A_0)\eta_\phi(\fn_\phi\cap
  \D)}$, so $\supp(\A_0)\leq Q$.  On the other hand, for
$x\in\fn_\phi$, $\Phi(x)\in\fn_\phi\cap \A_0$.  By
Lemma~\ref{sotdense}, $\eta_\phi(\fn_\phi\cap \A_0)\subseteq
\overline{\pi_\phi(\A_0)\eta_\phi(\fn_\phi\cap \D)}$, so
$\eta_\phi(\Phi(\fn_\phi))\subseteq \ran(\supp(\A_0)$.  But
$\ran(Q)=\overline{\eta_\phi(\Phi(\fn_\phi))},$ which yields $Q\leq
\supp(\A_0)$.  Thus, $Q=\supp(\A_0)$.  By Lemma~\ref{split}(c),
$\supp(\A_0)=\supp(\A)=\supp(\A_B)$, so~\eqref{PhiForm} holds.

Let $T\in\M$ and $d\in\D\cap\fn_\phi$.  Using~\eqref{PhiForm}, we
have,
\[\pi_\phi(\Phi(T))\eta_\phi(d) =
\eta_\phi(\Phi(Td))=\supp(\A)\eta_\phi(Td) =
\supp(\A)\pi_\phi(T)\eta_\phi(d).\] 
Since $\eta_\phi(\D\cap \fn_\phi)$ is dense in $\ran(P)$, we
obtain~\eqref{PhiFormB}. 

By~\eqref{PhiFormB},  $\Phi$ is Bures continuous.
Let $T\in\overline{\A}^{\text{Bures}}$.  Theorem~\ref{newSTB} ensures
that $T\in\overline{\A_0}^{\text{Bures}}$, so there exists a net
$T_\lambda$ in $\A_0$ which Bures-converges to $T$.  Since $\Phi$ is
Bures continuous,
\[T=\lim_\lambda T_\lambda =\lim_\lambda
\Phi(T_\lambda)=\Phi(\lim_\lambda T_\lambda)= \Phi(T),\] so
$T\in\A_0$.  The equality~\eqref{ABclosed} now follows.

Next we show  $(\A,\D)$ is a Cartan pair.  Obviously, $E|_\A$ is a
faithful normal conditional expectation of $\A$ onto $\D$, so we need
only prove that the span of $\U(\A)\cap \N(\M,\D)$ is $\sigma$-weakly
dense in $\A$.
Since $\A=\A_0$, it suffices to show that 
\begin{equation}\label{ninun}
\G\N(\M,\D)\cap \A\subseteq
\overline{\spn}^{\sigma\text{-weak}}(\G\N(\M,\D)\cap \U(\A)).
\end{equation}

Recall that the maximal ideal space of $\D$, $\hat{\D}$, is a compact,
extremally disconnected space (see
\cite[Theorem~III.1.18]{TakesakiThOpAlI}).  In particular, the Gelfand
transform determines a bijection between the set of projections in
$\D$ and the family of
 clopen subsets of $\hat{\D}$.  Also,
each non-zero $v\in\G\N(\A,\D)$ determines a partial homeomorphism
$\beta_v$ of $\hat{\D}$ with domain
$\{\rho\in\hat{\D}: \rho(v^*v)=1\}$ and
range $\{\rho\in\hat{\D}: \rho(vv^*)=1\}$,
via the formula,
\[\beta_v(\rho)(d):=\rho(v^*dv).\]  When $\beta_v(\rho)=\rho$ for all
$\rho\in\dom(\beta_v)$, it is not difficult to see that $v$ commutes
with $\D$ and hence $v\in\D$.  Finally, when $Q\in\D$ is a projection
such that $Qv^*v\neq 0$, $\beta_{vQ}$ is the restriction of $\beta_v$
to $\{\rho\in\hat{\D}: \rho(Q)=1\}\cap \dom(\beta_v)$.

Given $v\in\G\N(\A,\D)$ with $v\neq 0$, applying a variant of Frol\'{i}k's
Theorem (see~\cite[Proposition~2.7]{PittsStReIn}) to $\beta_v$ yields
projections $Q_0, Q_1, Q_2, Q_3\in\D$ such that $vQ_0\in\D$, $(vQ_j)^2=0$ for
$j=1,2,3$ and
\begin{equation}\label{Frolik}
v=\sum_{j=0}^3 vQ_j.
\end{equation}   

A calculation shows that when $w\in\G\N(\M,\D)\cap \A$ satisfies $w^2=0$, then
\[U:=w+w^*+(I-w^*w-ww^*)\in \U(\A)\cap \G\N(\M,\D)\dstext{and} w=Uw^*w.\] 
  Since $\spn(\U(\D))$ is $\sigma$-weakly dense in $\D$,
we obtain, $w\in \overline{\spn}^{\sigma\text{-weak}}(\G\N(\M,\D)\cap
\U(\A))$. Hence,
\[\overline{\spn}^{\sigma\text{-weak}}(\G\N(\M,\D)\cap\U(\A))\supseteq
\spn\{w\in\G\N(\M,\D)\cap\A: w^2=0\}.\] This together with \eqref{Frolik}
implies that
\[\overline{\spn}^{\sigma\text{-weak}}(\G\N(\M,\D)\cap\U(\A))\supseteq
\spn(\G\N(\M,\D)\cap \A).\]  Thus~\eqref{ninun} holds and $(\A,\D)$ is a
Cartan pair.

To obtain (c), let $\E\subseteq \G\N(\A,\D)$ be
a maximal $\D$-orthogonal family.  For each $u\in\E$,
Corollary~\ref{GNcapS} gives $P_u\leq \supp(\A)$.  For any $u\in\E$,
$x\in\M$ and $d\in\D\cap\fn_\phi$ we obtain,
\begin{align*}
\pi_\phi(uE(u^*\Phi(x)))\eta_\phi(d)&= 
P_u\pi_\phi(\Phi(x))\eta_\phi(d)=P_u\eta_\phi(\Phi(xd))&\text{(now
  apply 
part (a))}\\
&=
P_u\supp(\A)\eta_\phi(xd)=P_u\eta_\phi(xd)=P_u\pi_\phi(x)\eta_\phi(d)\\
&=\pi_\phi(uE(u^*x))\eta_\phi(d).
\end{align*}
This holds for every $d\in\D\cap\fn_\phi$.  Thus (using the fact that 
 $\phi=\phi\circ E$
is faithful) for every $x\in\M$ and $u\in\E$, we have 
\begin{equation}\label{Asupp}
uE(u^*\Phi(x))=uE(u^*x).
\end{equation}
By Proposition~\ref{BConv} applied to the Cartan pair $(\A,\D)$, 
\[\Phi(x)=\sum_{u\in E}uE(u^*\Phi(x))=\sum_{u\in E}uE(u^*x),\] where
the sums are Bures convergent.  This completes the proof.

\end{proof}

\section{An Extension Theorem}

 \stepcounter{subsection}

In this section, we prove our main result about extending isometric
algebra isomorphisms.  We begin with two definitions.

\begin{definition}
\begin{enumerate}
\item Given a Cartan pair $(\M,\D)$, a \textit{Cartan bimodule
algebra} is a $\sigma$-weakly closed subalgebra $\A$ of $\M$
satisfying $\D \subseteq \A$ and which generates $\M$ as a von Neumann
algebra.  We will sometimes write $\A \subseteq (\M,\D)$ to indicate
that $\A$ is a Cartan bimodule algebra for the pair $(\M,\D)$.
\item For $i = 1, 2$, let $\A_i \subseteq (\M_i,\D_i)$ be Cartan
bimodule algebras.  An (algebraic) isomorphism $\theta:\A_1
\rightarrow \A_2$ is a \textit{Cartan bimodule isomorphism} if
$\theta$ is isometric and $\theta(\D_1) = \D_2$.
\end{enumerate}
\end{definition}
\begin{remark}{Remark} In view of Theorem~\ref{aoi}, when $\A$ is a
  $\sigma$-weakly closed subalgebra of $\M$ containing $\D$, $\A$ is a Cartan
  bimodule algebra relative to the Cartan pair, $(W^*(\A),\D)$.
\end{remark}

\begin{lemma}\label{preserves normalizers} For $i = 1, 2$, let $\A_i
\subseteq (\M_i,\D_i)$ be Cartan bimodule algebras and suppose
$\theta:\A_1 \rightarrow \A_2$ is a Cartan bimodule isomorphism.  Then
$\theta(\G\N(\M_1,\D_1) \cap \A_1) = \G\N(\M_2,\D_2) \cap \A_2$.
\end{lemma}

\begin{proof} Let $v \in \G\N(\M_1,\D_1) \cap \A_1$.  Obviously
$\theta(v) \in \A_2$.  For all $h \in \D_1$, we have that
\[ \theta(h)\theta(v) = \theta(hv) = \theta(hvv^*v) = \theta(vv^*hv) =
\theta(v)\theta(v^*hv).
\] Since $\theta|_{\D_1}$ is a $*$-isomorphism, $\theta(v)^*\theta(h)
= \theta(v^*hv)\theta(v)^*$.  Hence, for all $d \in \D_1$, we have
that
\[ \theta(h)\theta(v)\theta(d)\theta(v)^* =
\theta(v)\theta(v^*hv)\theta(d)\theta(v)^* =
\theta(v)\theta(d)\theta(v^*hv)\theta(v)^* =
\theta(v)\theta(d)\theta(v)^*\theta(h),
\] and so $\theta(v)\theta(d)\theta(v)^* \in \M_2 \cap \D_2' = \D_2$.
Likewise $\theta(v)^*\theta(d)\theta(v) \in \D_2$, and so $\theta(v)
\in \N(\M_2,\D_2)$.  We now show that $\theta(v)$ is a partial
isometry.  Note that $p := \theta(v)^*\theta(v)$ belongs to the unit
ball of $\D_2$; we must show that $p$ is a projection.  To do this, we
show that the spectrum of $p$ is $\{0, 1\}$.  If not, let $0 < \lambda
< 1$ belong to the spectrum of $p$, and $\delta > 0$ be such that $0 <
\lambda - \delta < \lambda + \delta < 1$ and let $q$ be the spectral
projection for $p$ corresponding to the interval $(\lambda -
\delta,\lambda + \delta)$.  Then $\theta(v)q \neq 0$, and
$\theta^{-1}(q)$ is a projection in $\D_1$.  Then $0 \neq
v\theta^{-1}(q) \in \G\N(\M_1,\D_1) \cap \A_1$.  As $\theta$ is
isometric,
\[ 1 = \norm{v\theta^{-1}(q)}^2 = \norm{\theta(v)q}^2 = \norm{pq} < 1,
\] which is absurd.  Therefore, the spectrum of $p$ equals $\{0, 1\}$,
so $p$ is a projection.  Hence $\theta(v) \in \G\N(\M_2,\D_2) \cap
\A_2$.  The lemma follows.
\end{proof}

\begin{proposition} \label{Eregular} Let $\A \subseteq (\M,\D)$ be a
Cartan bimodule algebra. Define $\A^0 = \overline{\spn}(\G\N(\M,\D)
\cap \A)$ (norm closure) and $\C = C^*(\G\N(\M,\D) \cap \A)$.  Then
\begin{enumerate}
\item $\C = C^*(\A^0)$ and $\D \subseteq \A^0 \subseteq \C$;
\item $\C = \overline{\spn}(\G\N(\M,\D) \cap \C)$;
\item $\overline{\C}^{\sigma\text{-weak}} = \M$.
\end{enumerate} In particular, the pair $(\C,\D)$ is a $C^*$-diagonal
in the sense of Kumjian~\cite{KumjianOnC*Di}.
\end{proposition}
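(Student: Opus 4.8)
The plan is to run all four assertions through the Feldman--Moore coordinates already fixed for $(\M,\D)$, writing $\M=\MM(R,\sigma)$ and $\D=\AA(R,\sigma)$, and using that every $v\in\G\N(\M,\D)$ has the form $v=F(\phi)u$ with $\phi$ a partial $R$-isomorphism and $u\in\D$ unitary \cite[Proposition 2.2]{MercerBiOvCaSu}; recall also that $\D$ consists exactly of the elements of $\MM(R,\sigma)$ supported on the diagonal. For (i), one notes that each unitary of $\D$ is a partial isometry normalizing $\D$ and lies in $\A$, hence in $\G\N(\M,\D)\cap\A$, and such unitaries span a norm-dense subspace of the abelian $C^{*}$-algebra $\D$, so $\D\subseteq\A^0$; the inclusion $\A^0\subseteq\C$ and the equality $C^{*}(\A^0)=\C$ are immediate, since $\G\N(\M,\D)\cap\A\subseteq\A^0\subseteq\C$. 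In particular $1\in\D\subseteq\C\subseteq\M$, which is used below.

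For (ii) the point is that $\G\N(\M,\D)$ is closed under adjoints and products. Indeed $F(\phi)^{*}=F(\phi^{-1})$, and the convolution formula in $\MM(R,\sigma)$ gives $\D\,F(\psi)=F(\psi)\,\D$ and $F(\phi)F(\psi)=F(\phi\circ\psi)\,w$ for a $w\in\D$ of modulus one on $\dom(\phi\circ\psi)$ (the cocycle $\sigma$ contributes only modulus-one entries), so a product $(F(\phi)u)(F(\psi)u')$ is again of the form $F(\rho)u''$. Hence every word in the elements of $\G\N(\M,\D)\cap\A$ and their adjoints lies in $\G\N(\M,\D)$, and, being assembled from elements of $\C$, lies in $\G\N(\M,\D)\cap\C$; since such words span a norm-dense subspace of $\C=C^{*}(\G\N(\M,\D)\cap\A)$, we get $\C=\overline{\spn}(\G\N(\M,\D)\cap\C)$. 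For (iii), $(\ref{abundance})$ reads $\A=\overline{\spn}^{\sigma}(\G\N(\M,\D)\cap\A)$; since $\G\N(\M,\D)\cap\A\subseteq\A^0\subseteq\A$ and $\A$ is $\sigma$-weakly closed, passing to $\sigma$-weak closures gives $\A\subseteq\overline{\A^0}^{\sigma}\subseteq\A$. Then (iv) is quick: $\overline{\C}^{\sigma}$ is a $\sigma$-weakly closed unital $*$-subalgebra of $\M$, hence a von Neumann algebra; by (iii) it contains $\overline{\A^0}^{\sigma}=\A$, so it contains the von Neumann algebra generated by $\A$, which is $\M$; and $\overline{\C}^{\sigma}\subseteq\M$ since $\C\subseteq\M$.

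For the concluding assertion I would verify Kumjian's conditions \cite{KumjianOnC*Di} for $(\C,\D)$. As $\D$ is a MASA in $\M$ and $\D\subseteq\C\subseteq\M$, $\D$ is a MASA in $\C$; by (ii) it is regular in $\C$; and since $\D$ is Cartan in $\M$ there is a faithful normal conditional expectation $\M\to\D$, whose restriction to $\C$ is a faithful conditional expectation of $\C$ onto $\D$. It remains to exhibit $\C$ as the closed span of $\D$ together with the free normalizers in $\C$. Given $v=F(\phi)u\in\G\N(\M,\D)\cap\C$, split $\phi$ over its fixed-point set $P=\{x\in\dom\phi:\phi(x)=x\}$: then $v=\chi_{P}u+v'$, where $\chi_{P}u\in\D\subseteq\C$ and $v'=F(\phi|_{\dom\phi\setminus P})u\in\C$ is a groupoid normalizer whose graph is disjoint from the diagonal, so that $d_{1}v'd_{2}$ is supported off the diagonal for all $d_{1},d_{2}\in\D$ and hence $\overline{\D v'\D}\cap\D=\{0\}$, i.e., $v'$ is free (here one uses that $R$ is a genuine equivalence relation, so there is no obstructing isotropy). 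Combining with (ii), $\C$ is the closed span of $\D$ and its free normalizers, and $(\C,\D)$ is a $C^{*}$-diagonal. The step requiring the most care is (ii)---one must check that the cocycle $\sigma$ really is absorbed into the diagonal factor $w$, so that products of groupoid normalizers are groupoid normalizers and not merely normalizers---together with the analogous verification in the last paragraph that $v'$ is a partial isometry lying in $\C$.
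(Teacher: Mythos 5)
Your treatment of (i)--(iv) is correct and fills in what the paper dismisses as ``routine'': the observation that unitaries of $\D$ lie in $\G\N(\M,\D)\cap\A$ (giving $\D\subseteq\A^0$), the closure of $\G\N(\M,\D)$ under adjoints and products (with the cocycle absorbed into the diagonal factor) for (ii), the identification of (iii) with (\ref{abundance}), and the von Neumann algebra argument for (iv) all match the paper's intent; for (iv) the paper just writes the chain $\overline{\C}^\sigma=W^*(\A^0)=W^*(\A)=\M$, which is your argument.

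The last assertion is where you diverge from the paper and where there is a genuine gap. The paper does \emph{not} verify Kumjian's axioms directly: it notes that (ii) makes $(\C,\D)$ a regular inclusion, that $\D$ is a MASA in $\C$, and that $\D$ is injective, and then cites \cite[Section~2]{PittsStReIn} for the conclusion. You instead try to check Kumjian's definition, and in doing so you misstate the key condition: a \emph{free} normalizer in Kumjian's sense is one with $v^2=0$, not one with $\overline{\D v\D}\cap\D=\{0\}$. Your element $v'=F(\phi|_{\dom\phi\setminus P})u$, whose graph avoids the diagonal, need not satisfy $(v')^2=0$: if $\phi$ restricted off its fixed-point set has period two on a set of positive measure (a measurable ``transposition''), then $(v')^2$ is a nonzero element of $\D$. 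To repair this you would need an additional decomposition step --- e.g., partition $\dom\phi\setminus P$ into finitely many Borel sets $B_1,\dots,B_k$ with $\phi(B_i)\cap B_i=\emptyset$ (a ``three sets lemma'' for fixed-point-free Borel injections), so that each $F(\phi|_{B_i})u$ genuinely squares to zero and $v'$ is a finite sum of free normalizers in $\C$ --- or else invoke the known but nontrivial lemma that a normalizer killed by the expectation lies in the closed span of the free normalizers. As written, the claim ``$v'$ is free'' does not follow, so the concluding statement is not established by your argument; either supply the finite partition argument or fall back on the paper's citation, which uses only regularity, the MASA property, and injectivity of $\D$.
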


\begin{proof} (a) and (b) are routine.  We turn now to (c).  Since
$\A^0 \cap \G\N(\M,\D) = \A \cap \G\N(\M,\D)$, we have that
$\overline{\A^0}^{\sigma\text{-weak}} \cap \G\N(\M,\D) = \A \cap
\G\N(\M,\D)$, and so $\supp(\overline{\A^0}^{\sigma\text{-weak}}) =
\supp(\A)$, by Corollary~\ref{GNcapS}.  By
Corollary~\ref{BuresClosure},
\[ \overline{\A^0}^{\text{Bures}} =
\bimod(\supp(\overline{\A^0}^{\sigma\text{-weak}})) =
\bimod(\supp(\A)) = \overline{\A}^{\text{Bures}}.
\] Theorem~\ref{aoi} gives
$\overline{\C}^{\text{Bures}}=\overline{\C}^{\sigma\text{-weak}}$.
Thus,   
\[ \A \subseteq \overline{\A}^{\text{Bures}} =
\overline{\A^0}^{\text{Bures}} \subseteq \overline{\C}^{\text{Bures}}
= \overline{\C}^{\sigma\text{-weak}}=\M,
\] with the last equality holding because $W^*(\A)=\M.$  Hence
$\M = \overline{\C}^{\sigma\text{-weak}}$.

Now (b) says that $(\C,\D)$ is a regular inclusion.  Moreover, as $\D$
is a MASA in $\M$, it is a MASA in $\C$.  Since $\D$ is injective and
$E|_\C$ is a faithful conditional expectation of $\C$ onto $\D$, an
application of~\cite[Theorem~2.10]{PittsStReIn} shows $(\C,\D)$ is a
$C^*$-diagonal.
\end{proof}

\begin{corollary}\label{extend2c*diag} Let $\theta:\A_1 \to \A_2$ be a
Cartan bimodule isomorphism.  Then
\begin{enumerate}
\item there exists a unique $*$-isomorphism $\Theta:\C_1 \to \C_2$
such that $\Theta(x) = \theta(x)$ for all $x \in \A_1^0$ (notation as
in Proposition \ref{Eregular}); and
\item $\Theta(\G\N(\M_1,\D_1) \cap \C_1) = \G\N(\M_2,\D_2) \cap \C_2$.
\end{enumerate}
\end{corollary}

\begin{proof} By Lemma~\ref{preserves normalizers},
$\theta(\G\N(\M_1,\D_1) \cap \A_1) = \G\N(\M_2,\D_2) \cap \A_2$. It
follows that $\theta(\A_1^0) = \A_2^0$.  By
Proposition~\ref{Eregular}, the pair $(\C_i,\D_i)$ is a \cstardiag\
and $C^*(\A_i^0) = \C_i$, for $i = 1, 2$.  An application of
\cite[Theorem~2.16]{PittsNoAlAuCoBoIsOpAl} establishes (a).

Since $\Theta$ is a $*$-isomorphism and $\Theta(\D_1) = \D_2$, (b)
holds.
\end{proof}

The following gives most of Assertion~\ref{Mercer}.

\begin{theorem}\label{pmainMercer} For $i = 1, 2$, let $\A_i \subseteq
(\M_i,\D_i)$ be Cartan bimodule algebras and let $E_i:\M_i \rightarrow
\D_i$ be the faithful normal conditional expectations.  Let
$\theta:\A_1 \rightarrow \A_2$ be a Cartan bimodule isomorphism.  Then
there exists a unique $*$-isomorphism $\overline\theta:\M_1 \to \M_2$
such that
\[ \overline{\theta}|_{\A_1^0} = \theta|_{\A_1^0};
\] and $\theta|_{\A_1^0}$ is a homeomorphism of $(\A_1^0,\tau_B)$ onto
$(\A_2^0,\tau_B)$.

Furthermore, suppose $\omega_1$ is a faithful normal semi-finite
weight on $\D_1$ and let $\omega_2 = \omega_1 \circ
(\theta|_{\D_1})^{-1}$.  Set $\phi_i := \omega_i \circ E_i$ and let
$(\pi_{\phi_i},\fH_i,\eta_{\phi_i})$ be the semicyclic representation
of $\M_i$ corresponding to $\phi_i$.  Then there exists a unitary
$U:\fH_1 \rightarrow \fH_2$ such that for every $X \in \M_1$,
\[ U\pi_{\phi_1}(X) = \pi_{\phi_2}(\overline{\theta}(X))U.
\]
\end{theorem}

\begin{proof} We use the notation of Corollary~\ref{extend2c*diag}.
Since $(\C_i,\D_i)$ are \cstardiag s, $\C_i$ has the extension
property relative to $\D_i$.  In particular, the expectations
$E_i|_{\C_i}$ are unique.  Since $\Theta \circ E_1|_{\C_1} \circ
\Theta^{-1}$ is a conditional expectation of $\C_2$ onto $\D_2$, we
obtain
\[ E_2|_{\C_2} \circ \Theta = \Theta \circ E_1|_{\C_1}.
\] Hence
\[ (\omega_2 \circ E_2)|_{\C_2} = (\omega_1 \circ E_1)|_{\C_1} \circ
\Theta^{-1}.
\]

For $Y \in \C_1$ and $d \in \fn_{\phi_1} \cap \D_1$, we have
\[ \norm{\pi_{\phi_2}(\Theta(Y))\eta_{\phi_2}(\theta(d))}^2 =
\omega_2(E_2(\Theta(d^*Y^*Yd))) = \omega_1(E_1(d^*Y^*Yd)) =
\norm{\pi_{\phi_1}(Y)\eta_{\phi_1}(d)}^2.
\] As $\pi_{\phi_i}(\C_i)\eta_{\phi_i}(\fn_{\phi_i} \cap \D_i)$ is
dense in $\fH_i$ by Lemma~\ref{sotdense}, we find that the map
$\pi_{\phi_1}(Y)\eta_{\phi_1}(d) \mapsto
\pi_{\phi_2}(\Theta(Y))\eta_{\phi_2}(\theta(d))$ extends to a unitary
$U \in \B(\fH_1, \fH_2)$.  Moreover, for any $X \in \C_1$, we obtain
\[ U\pi_{\phi_1}(X) = \pi_{\phi_2}(\Theta(X))U.
\]

For $X \in \M_1$ we now define
\[ \overline{\theta}(X) := \pi_{\phi_2}^{-1}(U\pi_{\phi_1}(X)U^*).
\] Then $\overline\theta$ is a $*$-isomorphism of $\M_1$ onto $\M_2$
and by construction, $\overline\theta|_{\A_1^0} = \Theta|_{\A_1^0} =
\theta|_{\A_1^0}$.

The uniqueness of $\overline\theta$ follows from the facts that $\C_i$
are $\sigma$-weakly dense in $\M_i$ and $\Theta$ is the unique
extension of $\theta|_{\A_1^0}$ to a $*$-isomorphism of $\C_1$ onto
$\C_2$.

It is easy to see that $\overline{\theta} \circ E_1 = E_2 \circ
\overline{\theta}$, which implies $\overline{\theta}$ and
$(\overline{\theta})^{-1}$ are Bures continuous.  Thus the restriction
of $\overline\theta$ to $\A_1^0$ is a Bures homeomorphism onto
$\A_2^0$.
\end{proof}

We now strengthen Theorem~\ref{pmainMercer} by showing that when
$\theta$ is $\sigma$-weakly continuous, $\overline{\theta}|_{\A_1} =
\theta$.  (Note: If we knew that $\A_1^0$ was $\sigma$-weakly dense in
$\A_1$, this would be trivial.  Unfortunately, all we know is that
$\A_1^0$ is Bures dense in $\A_1$.)  We require some preparation.  The
notation will be as in Theorem~\ref{pmainMercer}.

\begin{lemma}\label{wellbeh} For $i = 1, 2$, let $\A_i \subseteq
(\M_i,\D_i)$ be Cartan bimodule algebras and let $E_i:\M_i \rightarrow
\D_i$ be the faithful normal conditional expectations.  Let
$\theta:\A_1 \rightarrow \A_2$ be a $\sigma$-weakly continuous Cartan
bimodule isomorphism.  If $x \in \A_1$ and $v \in \G\N(\M_1,\D_1)$,
then
\begin{equation}\label{wb} \theta(vE_1(v^*x)) =
\overline{\theta}(v)E_2(\overline{\theta}(v)^*\theta(x)).
\end{equation}
\end{lemma}

Before giving the proof, notice that Lemma~\ref{split}(b) gives
$vE(v^*x) \in \A_1$, so the left side of \eqref{wb} is defined.

\begin{proof} Let $x \in \A_1$ and $v \in \G\N(\M_1,\D_1)$.  By
Lemma~\ref{uniquece},
\begin{equation}\label{ucorc} \{vE_1(v^*x)\} = v\D_1 \cap
\overline{\text{co}}^{\sigma\text{-weak}}\{vUv^*xU^*: U \in
\U(\D_1)\}.
\end{equation}

We then have,
\begin{align*} \theta(vE_1(v^*x)) &\in
\theta(\overline{\text{co}}^{\sigma\text{-weak}}\{vUv^*xU^*: U \in
\U(\D_1)\})\\ &\subseteq
\overline{\text{co}}^{\sigma\text{-weak}}\{\theta(vUv^*)\theta(x)\theta(U^*):
U \in \U(\D_1)\}\\ &=
\overline{\text{co}}^{\sigma\text{-weak}}\{\overline{\theta}(vUv^*)\theta(x)\theta(U)^*:
U \in \U(\D_1)\}\\ &=
\overline{\text{co}}^{\sigma\text{-weak}}\{\overline{\theta}(v)\theta(U)\overline{\theta}(v)^*\theta(x)\theta(U)^*:
U \in \U(\D_1)\}\\ &=
\overline{\text{co}}^{\sigma\text{-weak}}\{\overline{\theta}(v)W\overline{\theta}(v)^*\theta(x)W^*:
W \in \U(\D_2)\}.
\end{align*} Since $vE_1(v^*x) \in \A_1^0$
(cf. Lemma~\ref{polarnormal}), we have $\theta(vE_1(v^*x)) =
\overline{\theta}(vE_1(v^*x)) \in \overline{\theta}(v)\D_2$.  Thus,
\[ \theta(vE_1(v^*x)) \in \overline{\theta}(v)\D_2 \cap
\overline{\text{co}}^{\sigma\text{-weak}}
\{\overline{\theta}(v)W\overline{\theta}(v)^*\theta(x)W^*: W \in
\U(\D_2)\} =
\{\overline{\theta}(v)E_2(\overline{\theta}(v)^*\theta(x))\}.
\] The lemma follows.
\end{proof}

\begin{theorem}\label{mainMercer} In addition to the hypotheses of
Theorem~\ref{pmainMercer}, assume $\theta$ is $\sigma$-weakly
continuous.  Then
\[ \theta = \overline{\theta}|_{\A_1}.
\]
\end{theorem}

\begin{proof} Let $\E \subseteq \G\N(\M_1,\D_1)$ be a maximal
$\D_1$-orthogonal set.  Then $\overline{\theta}(\E) \subseteq
\G\N(\M_2,\D_2)$ is a maximal $\D_2$-orthogonal set.

Let $X \in \A_1$ and suppose $F \subseteq \E$ is a finite set.  Then,
with the notation of Proposition~\ref{BConv} and using Lemma~\ref{wellbeh},
we have
\[ \theta(X_F) = \sum_{v \in F} \theta(vE_1(v^*X)) = \sum_{v \in F}
\overline{\theta}(v)E_2(\overline{\theta}(v)^*\theta(X)).
\] It then follows from Proposition~\ref{BConv} that $\theta(X_F)$ Bures
converges to $\theta(X)$.  On the other hand, since $X_F \in \A_1^0$,
we have $\theta(X_F) = \overline{\theta}(X_F)$.  As we noted in the
proof of Theorem~\ref{pmainMercer}, $\overline{\theta}$ is Bures
continuous.  Therefore,
\[ \overline{\theta}(X) = \text{Bures-}\lim \overline{\theta}(X_F) =
\text{Bures-}\lim \theta(X_F) =\theta(X).
\]
\end{proof}

\begin{remark}{Remark}\label{finalremark}
  Without a continuity hypothesis, we have been unable to
  obtain~Assertion~\ref{Mercer}, even when the Cartan pairs
  $\A_i\subseteq (\M_i,\D_i)$ are assumed synthetic.  Suppose $\A_i$
  are synthetic.  With the notation of Theorem~\ref{pmainMercer}, let
  $\alpha:=\overline{\theta}^{-1}\circ\theta$.  The hypothesis of
  synthesis implies $\alpha$ is an isometric automorphism of $\A_1$
  such that $\alpha|_{\A_1^0}=\text{id}|_{\A_1^0}.$ We have not been
  able to show $\alpha=\text{id}_{\A}$ without making a continuity
  hypothesis, and we suspect such a hypothesis may in general be
  necessary.

\end{remark}

\providecommand{\bysame}{\leavevmode\hbox to3em{\hrulefill}\thinspace}
\providecommand{\MR}{\relax\ifhmode\unskip\space\fi MR }

\providecommand{\MRhref}[2]{%
  \href{http://www.ams.org/mathscinet-getitem?mr=#1}{#2}
}
\providecommand{\href}[2]{#2}
\providecommand{\Zbl}[1]{#1.}

\def\cprime{$'$}

\end{document}